  \tikzset{snake it/.style={decorate, decoration={snake, amplitude = 2, segment length = 5,}}}
  \tikzstyle{arrow}=[draw, -latex]
  \tikzset{>=latex}
\setlist{leftmargin=1cm}
\newtheorem{theorem}{Theorem}
\newtheorem{lemma}[theorem]{Lemma}
\newtheorem{remark}[theorem]{Remark}
\theoremstyle{definition}
\begin{document}
\title{Combinatorics arising from lax colimits of posets}
\author{Zurab Janelidze, Helmut Prodinger, and Francois van Niekerk}
\begin{abstract}
In this paper we study maximal chains in certain lattices constructed from powers of chains by iterated lax colimits in the $2$-category of posets. Such a study is motivated by the fact that in lower dimensions, we get some familiar combinatorial objects such as Dyck paths and Kreweras walks.      
\end{abstract}
\maketitle

\section*{Introduction}

The well-known combinatorial objects, Dyck paths, can be interpreted as maximal chains in lattices (in the sense of \cite{Bir48}) given by the following Hasse diagrams:

$$\begin{tikzpicture}
  \pgfmathsetmacro{\hx}{2.8}
  \pgfmathsetmacro{\hy}{0}
  \pgfmathsetmacro{\ix}{0.3}
  \pgfmathsetmacro{\iy}{0.2}
  \pgfmathsetmacro{\kx}{0}
  \pgfmathsetmacro{\ky}{0.4}
		
  \foreach \h in {0,...,4} {
    \node at(\hx*\h,\hy*\h-0.5){$D_{\h}$};
	\node at(\hx*5,\hy*5-0.5){$\dots$};
		\tikzset{every node/.style={shape=circle,draw=black,fill=black,inner sep=1pt, outer sep=0pt}}	
		\foreach \i in {0,...,\h} {
				\foreach \k in {0,...,\i} {
					\node (\i\k) at(\hx*\h+\ix*\i+\kx*\k,\hy*\h+\iy*\i+\ky*\k){};
					\ifthenelse{\i=\h}{}{\draw (\hx*\h+\ix*\i+\kx*\k,\hy*\h+\iy*\i+\ky*\k) -- (\hx*\h+\ix*\i+\ix+\kx*\k,\hy*\h+\iy*\i+\iy+\ky*\k);}
					\ifthenelse{\k=\i}{}{\path (\hx*\h+\ix*\i+\kx*\k,\hy*\h+\iy*\i+\ky*\k) edge (\hx*\h+\ix*\i+\kx*\k+\kx,\hy*\h+\iy*\i+\ky*\k+\ky);}
					}
			}
	}
\end{tikzpicture}
$$

\noindent Each of the lattices $D_m$ can be constructed as a `lax colimit', in the $2$-category of posets, of the diagram

$$
	\begin{tikzpicture}
	\foreach \k in {0,...,6} {
		\node (\k) at(2*\k,0){\ifthenelse{\k<5}{$C_\k$}{\ifthenelse{\k=5}{$\dots$}{\ifthenelse{\k=6}{$C_m$}{}}}};
	}
	\foreach \k in {1,...,6} {
		\pgfmathsetmacro{\l}{\k-1}
		\draw [<-] (\k) edge (\l);
	}
	\end{tikzpicture}
$$

\noindent where $C_i$ stands for the chain with $i+1$ elements and each homomorphism $C_i\to C_{i+1}$ is an inclusion of down-closed sub-join-semilattices. A lax colimit `stacks' the chains above each other, turning each assignment $x\mapsto y$ given by a homomorphism into the relation $x<y$ in the lax colimit. This process can be visualized in the case of the diagram above with $n=3$ as follows: 

$$
		\begin{tikzpicture}[scale=0.5]
			\node at(2,-1){$D_{3}=$ lax colimit of };
			\tikzset{every node/.style={shape=circle,draw=black,fill=black,inner sep=1pt}}
			\foreach \i in {0,...,3} {
				\foreach \j in {0,...,\i} {
					\node at(\j,2*\i-\j){};
					\ifthenelse{\i<3}{\draw (\j,2*\i-\j) -- (\j+1,2*\i-\j+1);}{}
					\ifthenelse{\j<\i}{\draw (\j,2*\i-\j) -- (\j+1,2*\i-\j-1);}{}
				}
			}
		\end{tikzpicture}
		\quad\quad
		\begin{tikzpicture}[scale=1]
			\foreach \i in {0,...,3} {
			\node (label\i) at(\i,-1){$C_{\i}$};
			\tikzset{every node/.style={shape=circle,draw=black,fill=black,inner sep=1pt,outer sep=2pt}}
				\foreach \j in {0,...,\i} {
					\node (\i\j) at(\i,\i-\j){};
					\ifthenelse{\j<\i}{\draw (\i,\i-\j) -- (\i,\i-\j-1);}{}
				}
			}
			\draw [|->] (00) -- (11);
			\draw [|->] (11) -- (22);
			\draw [|->] (22) -- (33);
			\draw [|->] (10) -- (21);
			\draw [|->] (21) -- (32);
			\draw [|->] (20) -- (31);
			\draw [->] (label0) -- (label1);
			\draw [->] (label1) -- (label2);
			\draw [->] (label2) -- (label3);			
		\end{tikzpicture}
$$

\noindent One may now consider variations of this construction, where chains are replaced with other lattices. For instance, when we replace each $C_i$ with its cartesian square $C^2_i$, we get the following lattices:

$$\begin{tikzpicture}
		\pgfmathsetmacro{\hx}{2.8}
		\pgfmathsetmacro{\hy}{0}
		\pgfmathsetmacro{\ix}{0.3}
		\pgfmathsetmacro{\iy}{0.2}
		\pgfmathsetmacro{\jx}{-0.3}
		\pgfmathsetmacro{\jy}{0.4}
		\pgfmathsetmacro{\kx}{0.3}
		\pgfmathsetmacro{\ky}{0.4}
	\foreach \h in {0,...,4} {
		\node at(\hx*\h,\hy*\h-0.5){$K_{\h}$};
		\node at(\hx*5,\hy*5-0.5){$\dots$};
		\tikzset{every node/.style={shape=circle,draw=black,fill=black,inner sep=1,outer sep=2pt}}			
		\foreach \i in {0,...,\h} {
			\foreach \j in {0,...,\i} {
				\foreach \k in {0,...,\i} {
					\node (\i\j\k) at(\hx*\h+\ix*\i+\jx*\j+\kx*\k,\hy*\h+\iy*\i+\jy*\j+\ky*\k){};
					\ifthenelse{\i=\h}{}{\draw (\hx*\h+\ix*\i+\jx*\j+\kx*\k,\hy*\h+\iy*\i+\jy*\j+\ky*\k) -- (\hx*\h+\ix*\i+\ix+\jx*\j+\kx*\k,\hy*\h+\iy*\i+\iy+\jy*\j+\ky*\k);}
					\ifthenelse{\j=\i}{}{\draw (\hx*\h+\ix*\i+\jx*\j+\kx*\k,\hy*\h+\iy*\i+\jy*\j+\ky*\k) -- (\hx*\h+\ix*\i+\jx*\j+\jx+\kx*\k,\hy*\h+\iy*\i+\jy*\j+\jy+\ky*\k);}
					\ifthenelse{\k=\i}{}{\draw (\hx*\h+\ix*\i+\jx*\j+\kx*\k,\hy*\h+\iy*\i+\jy*\j+\ky*\k) -- (\hx*\h+\ix*\i+\jx*\j+\kx*\k+\kx,\hy*\h+\iy*\i+\jy*\j+\ky*\k+\ky);}
					}
				}
			}
	}
\end{tikzpicture}
$$

\noindent Each of these lattices decomposes as a lax colimit as follows (the example shown is for $n=3$):

$$\begin{tikzpicture}

		\pgfmathsetmacro{\hx}{0}
		\pgfmathsetmacro{\hy}{0}
		\pgfmathsetmacro{\ix}{0.3}
		\pgfmathsetmacro{\iy}{0.2}
		\pgfmathsetmacro{\jx}{-0.3}
		\pgfmathsetmacro{\jy}{0.4}
		\pgfmathsetmacro{\kx}{0.3}
		\pgfmathsetmacro{\ky}{0.4}
	\foreach \h in {3} {
		\node at(\hx*\h,\hy*\h-0.5){$K_{\h}=$ lax colimit of };
		\tikzset{every node/.style={shape=circle,draw=black,fill=black,inner sep=1pt,outer sep=2pt}}		
		\foreach \i in {0,...,\h} {
			\foreach \j in {0,...,\i} {
				\foreach \k in {0,...,\i} {
					\node at(\hx*\h+\ix*\i+\jx*\j+\kx*\k,\hy*\h+\iy*\i+\jy*\j+\ky*\k){};
					\ifthenelse{\i=\h}{}{\draw (\hx*\h+\ix*\i+\jx*\j+\kx*\k,\hy*\h+\iy*\i+\jy*\j+\ky*\k) -- (\hx*\h+\ix*\i+\ix+\jx*\j+\kx*\k,\hy*\h+\iy*\i+\iy+\jy*\j+\ky*\k);}
					\ifthenelse{\j=\i}{}{\draw (\hx*\h+\ix*\i+\jx*\j+\kx*\k,\hy*\h+\iy*\i+\jy*\j+\ky*\k) -- (\hx*\h+\ix*\i+\jx*\j+\jx+\kx*\k,\hy*\h+\iy*\i+\jy*\j+\jy+\ky*\k);}
					\ifthenelse{\k=\i}{}{\draw (\hx*\h+\ix*\i+\jx*\j+\kx*\k,\hy*\h+\iy*\i+\jy*\j+\ky*\k) -- (\hx*\h+\ix*\i+\jx*\j+\kx*\k+\kx,\hy*\h+\iy*\i+\jy*\j+\ky*\k+\ky);}
					}
				}
			}
	}

		\pgfmathsetmacro{\hx}{1.5}
		\pgfmathsetmacro{\hy}{0}
		\pgfmathsetmacro{\ix}{2}
		\pgfmathsetmacro{\iy}{0.4}
		\pgfmathsetmacro{\jx}{-0.3}
		\pgfmathsetmacro{\jy}{0.4}
		\pgfmathsetmacro{\kx}{0.3}
		\pgfmathsetmacro{\ky}{0.4}
	\foreach \h in {3} {
		\foreach \i in {0,...,\h} {
			\node (label\i) at(\hx*\h+\ix*\i,\hy*\h-0.5){$C^2_{\i}$};	
			\tikzset{every node/.style={shape=circle,draw=black,fill=black,inner sep=1pt,outer sep=2pt}}				
			\foreach \j in {0,...,\i} {
				\foreach \k in {0,...,\i} {
					\node (\i\j\k) at(\hx*\h+\ix*\i+\jx*\j+\kx*\k,\hy*\h+\iy*\i+\jy*\j+\ky*\k){};
					\ifthenelse{\i=\h}{}{}
					\ifthenelse{\j=\i}{}{\draw (\hx*\h+\ix*\i+\jx*\j+\kx*\k,\hy*\h+\iy*\i+\jy*\j+\ky*\k) -- (\hx*\h+\ix*\i+\jx*\j+\jx+\kx*\k,\hy*\h+\iy*\i+\jy*\j+\jy+\ky*\k);}
					\ifthenelse{\k=\i}{}{\draw (\hx*\h+\ix*\i+\jx*\j+\kx*\k,\hy*\h+\iy*\i+\jy*\j+\ky*\k) -- (\hx*\h+\ix*\i+\jx*\j+\kx*\k+\kx,\hy*\h+\iy*\i+\jy*\j+\ky*\k+\ky);}
					}
				}
			}
		}
		
		\draw [->] (label0) -- (label1);
		\draw [->] (label1) -- (label2);
		\draw [->] (label2) -- (label3);
		\draw [|->] (000) -- (100);
		\draw [|->] (100) -- (200);
		\draw [|->] (200) -- (300);
		\draw [|->] (101) -- (201);
		\draw [|->] (201) -- (301);
		\draw [|->] (202) -- (302);
		\draw [|->] (110) -- (210);
		\draw [|->] (210) -- (310);
		\draw [|->] (220) -- (320);
		\draw [|->] (111) -- (211);
		\draw [|->] (211) -- (311);
		\draw [|->] (212) -- (312);
		\draw [|->] (221) -- (321);
		\draw [|->] (222) -- (322);

\end{tikzpicture}
$$

\noindent Maximal chains in these lattices are in bijection with Kreweras walks \cite{BouMel05,Kre65} --- an observation originally due to Sarah Selkirk (private communication). This gives rise to the following question: \emph{which combinatorial objects arise as maximal chains in lattices stacked by means of lax colimits?} For instance, noting that the two examples above correspond to the second and the third rows in a (commutative) diagram of powers of chains (see Figure~\ref{FigA}), it becomes interesting to explore other sequences of homomorphisms arising from the same diagram.

Stacking lattices in the first row of the diagram in Figure~\ref{FigA} simply gives the sequence of chains (the second row) --- not so interesting in its own right. However, stacking lattices in the sequence of chains, so lattices in the second row, gives the Dyck situation described above. 
Maximal chains in these lattices (i.e., the lattices $D_m$) can be counted, as it is well known, by the Catalan numbers:
$$\frac{(2m)!}{m!(m+1)!}$$
The third row gives the Kreweras case. This already is a highly nontrivial combinatorial situation; for example, a bijective proof for the formula 
$$\frac{(3m)!4^m}{(m+1)!(2m+1)!}$$
that counts Kreweras walks (i.e., maximal chains in the lattices $K_m$) was found not long ago \cite{Ber07}. This might mean that getting similar numbers for stacking lattices along the subsequent rows can be vastly difficult. In this paper we solve the orthogonal problem: we find the maximal chain numbers for stacking lattices along the columns of the diagram in Figure~\ref{FigA}. The first column is again trivial, as it is identical (or rather, isomorphic) to the first row. The sequence of number here is simply the sequence of all positive natural numbers $1,2,3,4,\dots$. The second column stacks `hypercubes'. We show that the corresponding numbers are given by the odd double factorials:
$$\frac{(2n)!}{2^n n!}=(2n-1)!!$$ 
We first prove this by making use of the technique of representing weighted Dyck paths as involutions with no fixed points, from the lecture series of X. Viennot \cite{Vie06}, which then suggests a direct bijection with maximal chains in stacks of hypercubes. After this, it becomes evident how to deal with the remaining columns: involutions with no fixed points, which are the same as $2$-partitions, get replaced with $m$-partitions (of an $mn$ element set), giving us the numbers
$$\frac{(mn)!}{(m!)^n n!}$$
for maximal chains in the stacks along the $m$-th column of Figure~\ref{FigA}. 

\begin{figure}
\begin{tikzpicture}
		\pgfmathsetmacro{\ix}{1.3}
		\pgfmathsetmacro{\iy}{0}
		\pgfmathsetmacro{\jx}{0}
		\pgfmathsetmacro{\jy}{-1.3}
		\foreach \i in {0,...,4} {
			\foreach \j in {0,...,4} {
					\node (\i\j) at(\ix*\i+\jx*\j,\iy*\i+\jy*\j){$C^{\j}_{\i}$};
				}
			}
		\foreach \i in {5} {
			\foreach \j in {0,...,4} {
					\node (\i\j) at(\ix*\i+\jx*\j,\iy*\i+\jy*\j){$\dots$};
				}
			}
		\foreach \i in {0,...,4} {
			\foreach \j in {5} {
					\node (\i\j) at(\ix*\i+\jx*\j,\iy*\i+\jy*\j){$\vdots$};
				}
			}
		\foreach \i in {0,...,4} {
			\draw [->] (\i0) -- (\i1);
			\draw [->] (\i1) -- (\i2);
			\draw [->] (\i2) -- (\i3);
			\draw [->] (\i3) -- (\i4);
			\draw [->] (\i4) -- (\i5);
			\draw [->] (0\i) -- (1\i);
			\draw [->] (1\i) -- (2\i);
			\draw [->] (2\i) -- (3\i);
			\draw [->] (3\i) -- (4\i);
			\draw [->] (4\i) -- (5\i);

		}
\end{tikzpicture}
\caption{Homomorphism lattice of powers of chains}\label{FigA}
\end{figure}
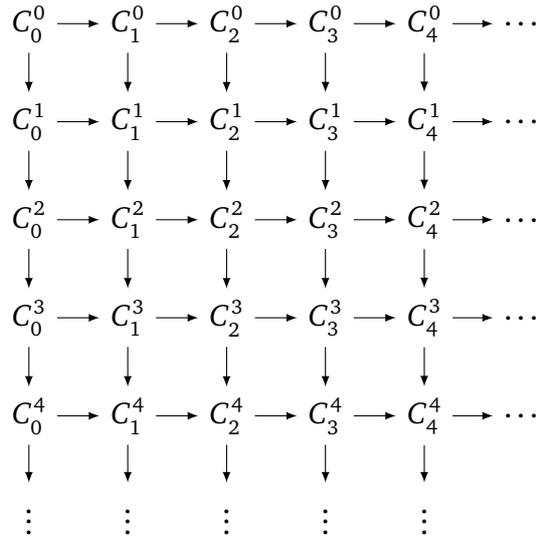

These results are obtained in  Section~\ref{SecA} of this paper (see Theorems~\ref{ThmG} and \ref{ThmA}). In Section~\ref{SecB}, we turn our attention to iterating the process of stacking and find the following: 
\begin{itemize}
\item Maximal chain numbers for $k$-th iteration of stacking lattices in the first column of Figure~\ref{FigA} are the $k$-dimensional Catalan numbers from \cite{SnoTro89}:
$$\frac{(k-1)!(kn)!}{n!(n+1)!\dots(n+k-1)!}.$$

\item Maximal chains in $k$-th iteration of stacking hypercubes (i.e., lattices in the second column of Figure~\ref{FigA}) are in bijection with the combinatorial objects discussed in \cite{FucYuZha20} in the case of $k=2$ (see the entry A213275 by 
Alois P. Heinz in the OEIS for the general $k$): words $w$ of length $(k+1)n$ in an alphabet $\{a_1,\dots,a_{n}\}$ with $n$ distinct letters, such that each letter occurs $k+1$ times in the word, and for each prefix $z$ of $w$, either $a_i$ does not occur in $z$ or if it does, then for each $j>i$, it occurs more or the same number of times as $a_j$. We establish this in Theorem~\ref{ThmF}. No explicit formulas for counting these combinatorial objects seem to be known.

\item Maximal chains in iteration of stacking lattices in the third column onwards in Figure~\ref{FigA} appear to be new combinatorial objects. At least, we could not find the corresponding numbers, generated on a computer, in the OEIS (see Figure~\ref{FigB}).
\end{itemize}
All three of these results arise as applications of a `representation theorem' (Thereom~\ref{ThmD}), which gives an embedding of $k$-iterated stacking of $m$-fold hypercubes into $(k+m)$-fold hypercubes, where by an \emph{$r$-fold hypercube} we mean a lattice of the form $C^n_r$ (i.e., a lattice in the $r$-th column of Figure~~\ref{FigA}). We formulate this theorem in Section~\ref{SecB}, but defer its proof to Section~\ref{sec proof}. The representation theorem also helped with the visualization of the corresponding stacked lattices --- see Figures~\ref{Figure stacking hypercubes} and \ref{Figure stacking m=2 cubes}, where $\Sigma^k_nC^n_m$ denotes the $k$-iterated stacking of $m$-fold hypercubes up to dimension $n$. 

In Section~\ref{sec remarks}, we expand on the link with lax colimits in the $2$-category of posets, and establish some algebraic properties of this construction to get an insight as to \emph{what kinds of lattices may arise when considering stacking of lattices in Figure~\ref{FigA}?} In particular, we show that first of all, they are indeed lattices, and secondly, they are in fact distributive lattices. We do not know, however, whether distributivity of these lattices can play a role in the combinatorial investigation of their maximal chains. 
Section~\ref{sec remarks} also prepares a way to Section~\ref{sec proof}, which is devoted to the proof of the representation theorem mentioned earlier. The proof is based on decomposing lax colimits of chains of homomorphisms into another kind of (weighted) colimits in category theory, which we call `lax pushouts' (although they do not form a particular type of lax colimits). This method enables one to almost trivialize the geometric complexity of the stacked lattices of $m$-fold hypercubes. A similar method can be used to prove a representation theorem (Theorem~\ref{ThmE}) for iterated stacking of lattices along the rows of Figure~\ref{FigA}, which we formulate in the last Section~\ref{Sec rows}. As did the previous representation theorem for stacked lattices along the columns of Figure~\ref{FigA}, this representation theorem allows one to easily generate on a computer the first few terms in the corresponding sequences of maximal chain numbers, and just as before, we quickly encounter new sequences --- see Figure~\ref{FigC}.

This paper is aimed at readers of diverse background. While we bring together combinatorics, lattice theory and category theory, we took particular care in the presentation to make the paper accessible to non-experts of each of these fields (moreover, the paper uses only basic concepts from these fields). We hope that our work will entice further research on combinatorics of maximal chains of stacked lattices (or posets, more generally). Among questions left unresolved in this paper is the question of finding explicit formulas for those integer sequences from Figures~\ref{FigB} and \ref{FigC} that do not appear in OEIS, or proving that they do not exist. We anticipate these questions to be highly non-trivial.


\section{Stacking hypercubes}\label{SecA}

Consider the sequence $C_1^\infty=(C_1^0,C_1^1,C_1^2,C_1^3,\dots)$ of hypercubes: each $C_1^n$ denotes the $n$-th cartesian power of the $1$-chain $C_1$. Thus, each $C_1^n$ is an $n$-dimensional cube. We will stack these along the obvious embeddings $C_1^i\to C_1^{i+1}$, visualized below in the case when $i=3$:

$$
		\begin{tikzpicture}
		\pgfmathsetmacro{\ix}{-0.8}
		\pgfmathsetmacro{\iy}{0.3}
		\pgfmathsetmacro{\jx}{-0.3}
		\pgfmathsetmacro{\jy}{1}
		\pgfmathsetmacro{\kx}{1.5}
		\pgfmathsetmacro{\ky}{1.6}
		\pgfmathsetmacro{\lx}{1}
		\pgfmathsetmacro{\ly}{0.5}
		
		\node at(0,-0.7) {$C_1^4$};
		\node at(-5,-0.7) {$C_1^3$};
		\tikzset{every node/.style={shape=circle,draw=black,fill=black,inner sep=1pt,outer sep=2pt}}	
		
		\foreach \i in {0,1} {
			\foreach \j in {0,1} {
				\foreach \k in {0,1} {
					\foreach \l in {0,1} {
					\node (\i\j\k\l) at(\ix*\i+\jx*\j+\kx*\k+\lx*\l,\iy*\i+\jy*\j+\ky*\k+\ly*\l){};
					\ifthenelse{\i=0}{\draw (\ix*\i+\jx*\j+\kx*\k+\lx*\l,\iy*\i+\jy*\j+\ky*\k+\ly*\l) -- (\ix*\i+\ix+\jx*\j+\kx*\k+\lx*\l,\iy*\i+\iy+\jy*\j+\ky*\k+\ly*\l);}{}
					\ifthenelse{\j=0}{\draw (\ix*\i+\jx*\j+\kx*\k+\lx*\l,\iy*\i+\jy*\j+\ky*\k+\ly*\l) -- (\ix*\i+\jx*\j+\jx+\kx*\k+\lx*\l,\iy*\i+\jy*\j+\jy+\ky*\k+\ly*\l);}{}
					\ifthenelse{\k=0}{\draw (\ix*\i+\jx*\j+\kx*\k+\lx*\l,\iy*\i+\jy*\j+\ky*\k+\ly*\l) -- (\ix*\i+\jx*\j+\kx*\k+\kx+\lx*\l,\iy*\i+\jy*\j+\ky*\k+\ky+\ly*\l);}{}
					\ifthenelse{\l=0}{\draw (\ix*\i+\jx*\j+\kx*\k+\lx*\l,\iy*\i+\jy*\j+\ky*\k+\ly*\l) -- (\ix*\i+\jx*\j+\kx*\k+\lx*\l+\lx,\iy*\i+\jy*\j+\ky*\k+\ly*\l+\ly);}{}
					}
				}
			}
		}

		\pgfmathsetmacro{\movex}{-5}
		\pgfmathsetmacro{\movey}{0}
				
		\foreach \i in {0,1} {
			\foreach \j in {0,1} {
					\foreach \l in {0,1} {
					\node (\i\j\l) at(\movex+\ix*\i+\jx*\j+\lx*\l,\movey+\iy*\i+\jy*\j+\ly*\l){};
					\ifthenelse{\i=0}{\draw (\movex+\ix*\i+\jx*\j+\lx*\l,\movey+\iy*\i+\jy*\j+\ly*\l) -- (\movex+\ix*\i+\ix+\jx*\j+\lx*\l,\movey+\iy*\i+\iy+\jy*\j+\ly*\l);}{}
					\ifthenelse{\j=0}{\draw (\movex+\ix*\i+\jx*\j+\lx*\l,\movey+\iy*\i+\jy*\j+\ly*\l) -- (\movex+\ix*\i+\jx*\j+\jx+\lx*\l,\movey+\iy*\i+\jy*\j+\jy+\ly*\l);}{}
					\ifthenelse{\l=0}{\draw (\movex+\ix*\i+\jx*\j+\lx*\l,\movey+\iy*\i+\jy*\j+\ly*\l) -- (\movex+\ix*\i+\jx*\j+\lx*\l+\lx,\movey+\iy*\i+\jy*\j+\ly*\l+\ly);}{}
					\draw [|->] (\i\j\l) -- (\i\j0\l);
					}
			}
		}
		\end{tikzpicture}$$

\noindent The numbers $\#C_1^n$ of maximal chains in the members of the sequence $C_1^\infty$ are of course given by the factorials
$$\#C_1^n=n! \quad (n\geqslant 0).$$

\begin{theorem}\label{ThmG}
The number of maximal chains in the lattice $\Sigma_n C_1^n$ obtained by stacking the hypercubes $C_1^0\to C_1^1\to \dots\to C_1^n$ is given by odd double factorials (where $(-1)!!=1$):
$$\#\Sigma_n C_1^n=(2n-1)!!$$
\end{theorem}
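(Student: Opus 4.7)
My plan is to construct an explicit bijection between maximal chains in $\Sigma_n C_1^n$ and perfect matchings on $\{1,2,\dots,2n\}$; since the latter are enumerated by $(2n-1)!!$, this proves the theorem. The approach realises the heuristic sketched in the introduction via Viennot's encoding of weighted Dyck paths as fixed-point-free involutions.

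The first task is to analyse the cover relations in $\Sigma_n C_1^n$. From the construction of the lax colimit, the covers are of exactly two kinds: intra-level \emph{flips} inside some $C_1^i$ that change a single coordinate from $0$ to $1$, and inter-level \emph{step-ups} $(x_1,\dots,x_i)\mapsto(x_1,\dots,x_i,0)$ coming from the embedding $C_1^i\hookrightarrow C_1^{i+1}$. Any comparability spanning non-adjacent levels factors through an intermediate step-up, and any comparability between adjacent levels not of step-up form factors through some $(x,0)$; hence no other covers occur. A direct consequence is that every maximal chain has length exactly $2n$, consisting of $n$ step-ups (to travel from $C_1^0$ to $C_1^n$) interleaved with $n$ flips (to raise the coordinate sum from $0$ to $n$).

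With this in hand I would set up the bijection. Number the $2n$ covers of a chain $1,\dots,2n$ in temporal order. The $j$-th step-up goes from $C_1^{j-1}$ to $C_1^j$ and creates what becomes the $j$-th coordinate of the top element $(1,\dots,1)\in C_1^n$; let $p_j$ be its position and $q_j$ the later step at which this coordinate is flipped to $1$. Send the chain to the matching $\{\{p_j,q_j\}:j=1,\dots,n\}$ of $\{1,\dots,2n\}$. For the inverse, given a matching, declare each position $k$ to be a step-up if it is the smaller element of its pair, and otherwise to be a flip of the coordinate whose creation was the partnered step. The main point to verify is that this prescription is always legal: each coordinate to be flipped has been created earlier (its partner is smaller) and appears in only one pair (so has not yet been flipped). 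The two maps are therefore mutually inverse, and $\#\Sigma_n C_1^n$ equals the number of perfect matchings on $2n$ points, namely $(2n-1)!!$. I anticipate no deep obstacle: the cover analysis is a routine inspection of the lax colimit, and once a chain is decomposed as a sequence of step-ups and flips, the bijection essentially writes itself.
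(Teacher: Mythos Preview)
Your proposal is correct. The cover analysis is right, the length count $2n$ is right, and the pairing of the step that \emph{creates} coordinate $j$ with the later step that \emph{flips} it is a well-defined bijection with perfect matchings on $\{1,\dots,2n\}$.

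A small remark on how this aligns with the paper: you mention Viennot's encoding, but you never actually use it. The paper's formal proof does go that route --- it first expresses $\#\Sigma_n C_1^n$ as a sum over weighted Dyck paths (the Dyck path records the heights $i_j$ reached in each cube, the weights are falling factorials counting the choices of which coordinates to flip), and then invokes Viennot's Hermite-history bijection to turn each weighted Dyck path into a fixed-point-free involution. Your bijection bypasses all of this and goes straight from chains to matchings. The paper in fact presents exactly your direct bijection in the paragraph immediately following the proof (phrased via the embedding $\Sigma_n C_1^n \hookrightarrow C_2^n$, but it is the same pairing), and notes that this direct version is what generalises cleanly to the $m$-partition case of Theorem~\ref{ThmA}. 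So your argument is the paper's preferred one; it just isn't the one inside the formal proof environment.
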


\begin{proof}
It is well known (see e.g.~\cite{Vie06}) that odd double factorials count the number of involutions on a set with $2n$ elements with no fixed points (which are the same as $2$-partitions). To see how, notice that such an involution can be represented as a list of distinct elements of $2n$ (permutation), where each consecutive pair of odd and neighboring even entry in the list is an involuted pair. There are $(2n)!$ such lists, but we get them
too often: for each pair, we must divide by $2$, so altogether by $2^n$, and then the order of the pairs does not matter, so we must divide by $n!$. This gives
$$\frac{(2n)!}{2^n n!}=(2n-1)!!.$$
We will now establish a bijection between maximal chains in $\Sigma_nC_1^n$ and involutions of a $2n$ element set having no fixed points. A walk along such chain visits each cube, makes some steps in the cube, and moves on to the next cube: 

$$
			\begin{tikzpicture}[scale = 0.4, line width = 0.1mm]
			\draw[thick] (0,0+0mm) ellipse (2mm and 4mm);
			\draw[thick] (5,0+5.8mm) ellipse (5mm and 10mm);
			\draw[thick] (10,+11.8mm) ellipse (8mm and 16mm);
			\draw[thick] (15,+17.6mm) ellipse (11mm and 22mm);
			\draw[thick] (25,+35.8mm) ellipse (20mm and 40mm);

			\node (1) at (0, 0){$\bullet$};
			\node  at (0, -2){$C_1^0$};
			\node (2) at (5, 0){$\bullet$};
			\node  at (5, -2){$C_1^1$};
			\node (3) at (10, 0){$\bullet$};
			\node  at (10, -2){$C_1^2$};
			\node (3a) at (10, 0.8){$\bullet$};
			\node (4) at (15, 0.8){$\bullet$};
			\node  at (15, -2){$C_1^3$};
			\node (4a) at (15, 1.5){$\bullet$};
			\node (5) at (25, 1.5){$\bullet$};
			\node  at (25, -2){$C_1^n$};
			\node (5a) at (25, 5){$\bullet$};
			\draw [->, thick](1) -- (2);
			\draw [->, thick](2) -- (3);
			\draw [->, thick](3a) -- (4);
			\draw [->, thick](4a) -- (5);
			
			\draw[  thick, snake it](10, 0) -- (10, 0.8);
			\draw[  thick, snake it](15,0.8) -- (15,1.5);
			\draw[  thick, snake it](25,1.5) -- (25,5);
			
			\node at (17+2, 1){$.$};
			\node at (17.5+2, 1){$.$};
			\node at (18+2, 1){$.$};
			
			\end{tikzpicture}
$$

\noindent Let $i_j$ represent height reached in the cube $C^j_1$ before the exit ($i_0=0$). Then $i_j$ is also the height at which the next cube $C^{j+1}_1$ is entered. The maximum height that can be reached in $C^j_1$ is $j+1$. The number of possible paths that can be taken inside $C^{j+1}_1$ after entering it at height $i_{j}$ and before existing it at height $i_{j+1}$ is the falling factorial 
$$(j+1-i_{j})(j+1-i_{j}-1)\dots(j+1-i_{j+1}-(i_{j+1}-i_{j}-1))=(j+1-i_{j})^{\underline{i_{j+1}-i_{j}}}.$$
The number of maximal chains in $\Sigma_nC^n_1$ is thus given by
\begin{equation*}
\#\Sigma_nC^n_1=\sum_{0=i_0\leqslant i_1\leqslant \dots \leqslant  i_n=n}\prod_{j=0}^{n-1} (j+1-i_{j})^{\underline{i_{j+1}-i_{j}}}.
\end{equation*}
The indices in this summation form a Dyck path. Here is an example (with $n=7$):

\newcommand*{\xMin}{0}
\newcommand*{\xMax}{7}
\newcommand*{\yMin}{0}
\newcommand*{\yMax}{7}
$$
		\begin{tikzpicture}[scale=0.7]
		\foreach \i in {\xMin,...,\xMax} {
			\draw [very thin,gray] (\i,\yMin) -- (\i,\yMax)  node [below] at (\i,\yMin) {$\i$};
		}
	\foreach \i in {\yMin,...,\yMax} {
		\draw [very thin,gray] (\xMin,\i) -- (\xMax,\i) node [left] at (\xMin,\i) { };
		}
		
		\draw[dotted] (0,0) -- (7,7);
		\draw[ultra thick] (0,0) -- (3,0)--(3,1)--(5,1)--(5,4)--(6,4)--(6,6)--(7,6)--(7,7);
		
		\node  at(-0.5,0.2){\tiny $i_0=0$};
		\node  at(1,0.2){\tiny $i_1=0$};
		\node  at(2,0.2){\tiny $i_2=0$};
		
		\node  at(2.5,1.2){\tiny $i_3=1$};
			\node  at(4,1.2){\tiny $i_4=1$};
			\node  at(4.5,4.2){\tiny $i_5=4$};
			\node  at(5.5,6.2){\tiny $i_6=6$};
			\node  at(7,7.2){\tiny $i_7=7$};
		\end{tikzpicture}
$$

\noindent This is a `weighted' Dyck path, with the weight given by the terms from the above summation:
 \begin{equation*}
1^{\underline{0}}\;2^{\underline{0}}\;3^{\underline{1}}\;3^{\underline{0}}\;4^{\underline{3}}\;2^{\underline{2}}\;1^{\underline{1}}
=3\cdot4\cdot3\cdot2\cdot2\cdot1\cdot1.
 \end{equation*}
Here is a drawing with the weights distributed on the Dyck path:

$$
		\begin{tikzpicture}[scale=0.7]
		\foreach \i in {\xMin,...,\xMax} {
			\draw [very thin,gray] (\i,\yMin) -- (\i,\yMax)  node [below] at (\i,\yMin) {$\i$};
		}
		\foreach \i in {\yMin,...,\yMax} {
			\draw [very thin,gray] (\xMin,\i) -- (\xMax,\i) node [left] at (\xMin,\i) { };
		}
		
		\draw[dotted] (0,0) -- (7,7);
		\draw[ultra thick] (0,0) -- (3,0)--(3,1)--(5,1)--(5,4)--(6,4)--(6,6)--(7,6)--(7,7);
		
		\node  at(-0.5,0.2){\tiny $i_0=0$};
		\node  at(1,0.2){\tiny $i_1=0$};
		\node  at(2,0.2){\tiny $i_2=0$};
		
		\node  at(2.5,1.2){\tiny $i_3=1$};
		\node  at(4,1.2){\tiny $i_4=1$};
		\node  at(4.5,4.2){\tiny $i_5=4$};
		\node  at(5.5,6.2){\tiny $i_6=6$};
		\node  at(7,7.2){\tiny $i_7=7$};
		
		\node  at(7.2,6.5){$\mathbf{1}$};
\node  at(6.2,5.5){$\mathbf{1}$};
\node  at(6.2,4.5){$\mathbf{2}$};		
\node  at(5.2,3.5){$\mathbf{2}$};		
\node  at(5.2,2.5){$\mathbf{3}$};		
\node  at(5.2,1.5){$\mathbf{4}$};
\node  at(3.2,0.5){$\mathbf{3}$};				

		\end{tikzpicture}
$$

\noindent Each bold-face number in the display above represents number of choices when going upward in the given cube. A more traditional way of drawing this is:

$$
\resizebox{0.7\textwidth}{!}{
		\begin{tikzpicture}[scale=0.9]
		\foreach \i in {\xMin,...,14} {
			\draw [very thin,gray] (\i,0) -- (\i,4)  node [below] at (\i,\yMin) {$\i$};
		}
		\foreach \i in {\yMin,...,4} {
			\draw [very thin,gray] (\xMin,\i) -- (2*\xMax,\i) node [left] at (\xMin,\i) { };
		}

		\draw[ultra thick] (0,0) -- (1,1)--(2,0)--(4,2)--(5,1)--(8,4)--(10,2)--(11,3)--(14,0);

		\node  at(0.3,0.6){$\mathbf{1}$};
		\node  at(2.3,0.6){$\mathbf{1}$};
		\node  at(3.3,1.6){$\mathbf{2}$};		
		\node  at(5.3,1.6){$\mathbf{2}$};		
		\node  at(6.3,2.6){$\mathbf{3}$};		
		\node  at(7.3,3.6){$\mathbf{4}$};
		\node  at(10.3,2.6){$\mathbf{3}$};				
		
		\end{tikzpicture}}
$$

\noindent Notice that the weights at each level match with the height of the level.
In his video book, X.~Viennot ($n!$--garden, part (b)) constructs a fixed-point free involution on the set $\{1,2,\dots,14\}$ from such a weighted Dyck path (which he calls `Hermite history'). First, he draws the following, which indicates whether at each position the Dyck path makes an `up-step' or a `down-step'.  

$$
\resizebox{0.6\textwidth}{!}{		\begin{tikzpicture}[scale=0.8]
		\foreach \i in {1,...,14} {
			 \node at(\i,0){$\bullet$};
		}
		\foreach \i in {1,...,14} {
			\node at(\i,-0.5){$\i$};
		}
	\foreach \i in {1,3,4,6,7,8,11} {
		\draw (\i,0.0) -- (\i+0.3,0.3);
	}
\foreach \i in {2,5,9,10,12,13,14} {
	\draw (\i,0.0) -- (\i-0.3,0.3);
}
	
		\end{tikzpicture}}
$$

\noindent He will pair each up-step-node with a down-step-node, but which one? We have choices! Copy the weights from the Dyck path:

$$
	\resizebox{0.6\textwidth}{!}{	\begin{tikzpicture}[scale=0.8]
		\foreach \i in {1,...,14} {
			\node at(\i,0){$\bullet$};
		}
		\foreach \i in {1,...,14} {
			\node at(\i,-0.5){$\i$};
		}
		\foreach \i in {1,3,4,6,7,8,11} {
			\draw (\i,0.0) -- (\i+0.3,0.3);
					}
		
		\node at (1,-1){$\mathbf{1}$};
		\node at (3,-1){$\mathbf{1}$};
		\node at (4,-1){$\mathbf{2}$};
		\node at (6,-1){$\mathbf{2}$};
		\node at (7,-1){$\mathbf{3}$};
		\node at (8,-1){$\mathbf{4}$};
		\node at (11,-1){$\mathbf{3}$};		
				
		\foreach \i in {2,5,9,10,12,13,14} {
			\draw (\i,0.0) -- (\i-0.3,0.3);
		}
		
		\end{tikzpicture}}
$$

\noindent As we can see, one may connect $11$ with one of $12,13,14$. The boldface number $\mathbf 3$ now tells us that we have 3 options for such a pairing. Write below the bold-face numbers which options will be selected:

$$
\resizebox{0.6\textwidth}{!}{		\begin{tikzpicture}[scale=0.8]
		\foreach \i in {1,...,14} {
			\node at(\i,0){$\bullet$};
		}
		\foreach \i in {1,...,14} {
			\node at(\i,-0.5){$\i$};
		}
		\foreach \i in {1,3,4,6,7,8,11} {
			\draw (\i,0.0) -- (\i+0.3,0.3);
		}
		
		\node at (1,-1){$\mathbf{1}$};
		\node at (3,-1){$\mathbf{1}$};
		\node at (4,-1){$\mathbf{2}$};
		\node at (6,-1){$\mathbf{2}$};
		\node at (7,-1){$\mathbf{3}$};
		\node at (8,-1){$\mathbf{4}$};
		\node at (11,-1){$\mathbf{3}$};

		\node at (1,-1.5){$\mathit{1}$};
		\node at (3,-1.5){$\mathit{1}$};
		\node at (4,-1.5){$\mathit{1}$};
		\node at (6,-1.5){$\mathit{2}$};
		\node at (7,-1.5){$\mathit{1}$};
		\node at (8,-1.5){$\mathit{3}$};
		\node at (11,-1.5){$\mathit{1}$};

		\foreach \i in {2,5,9,10,12,13,14} {
			\draw (\i,0.0) -- (\i-0.3,0.3);
		}
		
		\end{tikzpicture}}
$$

\noindent Now we work off the up-step-nodes from right to left, and connect with our choice. For 11 we choose 1, therefore our first choice (from the right), which is node $14$:

$$
\resizebox{0.6\textwidth}{!}{		\begin{tikzpicture}[scale=0.8]
		\foreach \i in {1,...,14} {
			\node at(\i,0){$\bullet$};
		}
		\foreach \i in {1,...,14} {
			\node at(\i,-0.5){$\i$};
		}
		\foreach \i in {1,3,4,6,7,8} {
			\draw (\i,0.0) -- (\i+0.3,0.3);
		}
			\foreach \i in {2,5,9,10,12,13} {
			\draw (\i,0.0) -- (\i-0.3,0.3);
}		
			
		\node at (1,-1){$\mathbf{1}$};
		\node at (3,-1){$\mathbf{1}$};
		\node at (4,-1){$\mathbf{2}$};
		\node at (6,-1){$\mathbf{2}$};
		\node at (7,-1){$\mathbf{3}$};
		\node at (8,-1){$\mathbf{4}$};
		\node at (11,-1){$\mathbf{3}$};

		\node at (1,-1.5){$\mathit{1}$};
		\node at (3,-1.5){$\mathit{1}$};
		\node at (4,-1.5){$\mathit{1}$};
		\node at (6,-1.5){$\mathit{2}$};
		\node at (7,-1.5){$\mathit{1}$};
		\node at (8,-1.5){$\mathit{3}$};
		\node at (11,-1.5){$\mathit{1}$};
		\draw (11,0) .. controls (12.5,1) .. (14,0);

		\end{tikzpicture}}
$$

\noindent We then move to the next node, which is labeled 8, and connect it with the $3$rd option (from the right), which is node $10$:

$$
\resizebox{0.6\textwidth}{!}{		\begin{tikzpicture}[scale=0.8]
		\foreach \i in {1,...,14} {
			\node at(\i,0){$\bullet$};
		}
		\foreach \i in {1,...,14} {
			\node at(\i,-0.5){$\i$};
		}
		\foreach \i in {1,3,4,6,7} {
			\draw (\i,0.0) -- (\i+0.3,0.3);
		}
		\foreach \i in {2,5,9,12,13} {
			\draw (\i,0.0) -- (\i-0.3,0.3);
}			
			
			\node at (1,-1){$\mathbf{1}$};
			\node at (3,-1){$\mathbf{1}$};
			\node at (4,-1){$\mathbf{2}$};
			\node at (6,-1){$\mathbf{2}$};
			\node at (7,-1){$\mathbf{3}$};
			\node at (8,-1){$\mathbf{4}$};
			\node at (11,-1){$\mathbf{3}$};

			\node at (1,-1.5){$\mathit{1}$};
			\node at (3,-1.5){$\mathit{1}$};
			\node at (4,-1.5){$\mathit{1}$};
			\node at (6,-1.5){$\mathit{2}$};
			\node at (7,-1.5){$\mathit{1}$};
			\node at (8,-1.5){$\mathit{3}$};
			\node at (11,-1.5){$\mathit{1}$};
			\draw (11,0) .. controls (12.5,1) .. (14,0);
			\draw (8,0) .. controls (9,0.7) .. (10,0);

		\end{tikzpicture}}
$$

\noindent Now we move to the next node $7$ and connect it with the first option (from the right), which is node $13$:

$$
\resizebox{0.6\textwidth}{!}{		\begin{tikzpicture}[scale=0.8]
		\foreach \i in {1,...,14} {
			\node at(\i,0){$\bullet$};
		}
		\foreach \i in {1,...,14} {
			\node at(\i,-0.5){$\i$};
		}
		\foreach \i in {1,3,4,6} {
			\draw (\i,0.0) -- (\i+0.3,0.3);
		}
		\foreach \i in {2,5,9,12} {
			\draw (\i,0.0) -- (\i-0.3,0.3);
	}	
			
			\node at (1,-1){$\mathbf{1}$};
			\node at (3,-1){$\mathbf{1}$};
			\node at (4,-1){$\mathbf{2}$};
			\node at (6,-1){$\mathbf{2}$};
			\node at (7,-1){$\mathbf{3}$};
			\node at (8,-1){$\mathbf{4}$};
			\node at (11,-1){$\mathbf{3}$};

			\node at (1,-1.5){$\mathit{1}$};
			\node at (3,-1.5){$\mathit{1}$};
			\node at (4,-1.5){$\mathit{1}$};
			\node at (6,-1.5){$\mathit{2}$};
			\node at (7,-1.5){$\mathit{1}$};
			\node at (8,-1.5){$\mathit{3}$};
			\node at (11,-1.5){$\mathit{1}$};
			\draw (11,0) .. controls (12.5,1) .. (14,0);
			\draw (8,0) .. controls (9,0.7) .. (10,0);
			\draw (7,0) .. controls (10,1.5) .. (13,0);

		\end{tikzpicture}}
$$

\noindent And so forth:

$$
\resizebox{0.6\textwidth}{!}{		\begin{tikzpicture}[scale=0.8]
		\foreach \i in {1,...,14} {
			\node at(\i,0){$\bullet$};
		}
		\foreach \i in {1,...,14} {
			\node at(\i,-0.5){$\i$};
		}
		\foreach \i in {1,3,4} {
			\draw (\i,0.0) -- (\i+0.3,0.3);
		}
		\foreach \i in {2,5,12} {
			\draw (\i,0.0) -- (\i-0.3,0.3);
	}	
			
			\node at (1,-1){$\mathbf{1}$};
			\node at (3,-1){$\mathbf{1}$};
			\node at (4,-1){$\mathbf{2}$};
			\node at (6,-1){$\mathbf{2}$};
			\node at (7,-1){$\mathbf{3}$};
			\node at (8,-1){$\mathbf{4}$};
			\node at (11,-1){$\mathbf{3}$};

			\node at (1,-1.5){$\mathit{1}$};
			\node at (3,-1.5){$\mathit{1}$};
			\node at (4,-1.5){$\mathit{1}$};
			\node at (6,-1.5){$\mathit{2}$};
			\node at (7,-1.5){$\mathit{1}$};
			\node at (8,-1.5){$\mathit{3}$};
			\node at (11,-1.5){$\mathit{1}$};
			\draw (11,0) .. controls (12.5,1) .. (14,0);
			\draw (8,0) .. controls (9,0.5) .. (10,0);
			\draw (7,0) .. controls (10,1.5) .. (13,0);
			\draw (6,0) .. controls (7.5,1) .. (9,0);

		\end{tikzpicture}}
$$
$$
\resizebox{0.6\textwidth}{!}{		\begin{tikzpicture}[scale=0.8]
		\foreach \i in {1,...,14} {
			\node at(\i,0){$\bullet$};
		}
		\foreach \i in {1,...,14} {
			\node at(\i,-0.5){$\i$};
		}
		\foreach \i in {1,3} {
			\draw (\i,0.0) -- (\i+0.3,0.3);
		}
		\foreach \i in {2,5} {
			\draw (\i,0.0) -- (\i-0.3,0.3);
}		
			
			\node at (1,-1){$\mathbf{1}$};
			\node at (3,-1){$\mathbf{1}$};
			\node at (4,-1){$\mathbf{2}$};
			\node at (6,-1){$\mathbf{2}$};
			\node at (7,-1){$\mathbf{3}$};
			\node at (8,-1){$\mathbf{4}$};
			\node at (11,-1){$\mathbf{3}$};

			\node at (1,-1.5){$\mathit{1}$};
			\node at (3,-1.5){$\mathit{1}$};
			\node at (4,-1.5){$\mathit{1}$};
			\node at (6,-1.5){$\mathit{2}$};
			\node at (7,-1.5){$\mathit{1}$};
			\node at (8,-1.5){$\mathit{3}$};
			\node at (11,-1.5){$\mathit{1}$};
			\draw (11,0) .. controls (12.5,1) .. (14,0);
			\draw (8,0) .. controls (9,0.5) .. (10,0);
			\draw (7,0) .. controls (10,1.5) .. (13,0);
			\draw (6,0) .. controls (7.5,1) .. (9,0);
			\draw (4,0) .. controls (8,1.5) .. (12,0);

		\end{tikzpicture}}
$$

$$
\resizebox{0.6\textwidth}{!}{		\begin{tikzpicture}[scale=0.8]
		\foreach \i in {1,...,14} {
			\node at(\i,0){$\bullet$};
		}
		\foreach \i in {1,...,14} {
			\node at(\i,-0.5){$\i$};
		}
		\foreach \i in {1} {
			\draw (\i,0.0) -- (\i+0.3,0.3);
		}
		\foreach \i in {2} {
			\draw (\i,0.0) -- (\i-0.3,0.3);
	}	
			
			\node at (1,-1){$\mathbf{1}$};
			\node at (3,-1){$\mathbf{1}$};
			\node at (4,-1){$\mathbf{2}$};
			\node at (6,-1){$\mathbf{2}$};
			\node at (7,-1){$\mathbf{3}$};
			\node at (8,-1){$\mathbf{4}$};
			\node at (11,-1){$\mathbf{3}$};

			\node at (1,-1.5){$\mathit{1}$};
			\node at (3,-1.5){$\mathit{1}$};
			\node at (4,-1.5){$\mathit{1}$};
			\node at (6,-1.5){$\mathit{2}$};
			\node at (7,-1.5){$\mathit{1}$};
			\node at (8,-1.5){$\mathit{3}$};
			\node at (11,-1.5){$\mathit{1}$};
			\draw (11,0) .. controls (12.5,1) .. (14,0);
			\draw (8,0) .. controls (9,0.5) .. (10,0);
			\draw (7,0) .. controls (10,1.5) .. (13,0);
			\draw (6,0) .. controls (7.5,1) .. (9,0);
			\draw (4,0) .. controls (8,1.5) .. (12,0);
			\draw (3,0) .. controls (4,0.5) .. (5,0);

		\end{tikzpicture}}
$$

$$
\resizebox{0.6\textwidth}{!}{		\begin{tikzpicture}[scale=0.8]
		\foreach \i in {1,...,14} {
			\node at(\i,0){$\bullet$};
		}
		\foreach \i in {1,...,14} {
			\node at(\i,-0.5){$\i$};
		}

			\node at (1,-1){$\mathbf{1}$};
			\node at (3,-1){$\mathbf{1}$};
			\node at (4,-1){$\mathbf{2}$};
			\node at (6,-1){$\mathbf{2}$};
			\node at (7,-1){$\mathbf{3}$};
			\node at (8,-1){$\mathbf{4}$};
			\node at (11,-1){$\mathbf{3}$};

			\node at (1,-1.5){$\mathit{1}$};
			\node at (3,-1.5){$\mathit{1}$};
			\node at (4,-1.5){$\mathit{1}$};
			\node at (6,-1.5){$\mathit{2}$};
			\node at (7,-1.5){$\mathit{1}$};
			\node at (8,-1.5){$\mathit{3}$};
			\node at (11,-1.5){$\mathit{1}$};
			\draw (11,0) .. controls (12.5,1) .. (14,0);
			\draw (8,0) .. controls (9,0.5) .. (10,0);
			\draw (7,0) .. controls (10,1.5) .. (13,0);
			\draw (6,0) .. controls (7.5,1) .. (9,0);
			\draw (4,0) .. controls (8,1.5) .. (12,0);
			\draw (3,0) .. controls (4,0.5) .. (5,0);
			\draw (1,0) .. controls (1.5,0.5) .. (2,0);

		\end{tikzpicture}}
$$

\noindent The goal is achieved; we constructed an involution.\end{proof}

To see the bijection between $2$-partitions of a $2n$-element set and maximal chains in the lattice $\Sigma_nC^n_1$ more directly, we first embed $\Sigma_nC^n_1$ in $C_2^{n}$ as follows. Represent elements of $C_2$ as $0$, $1$ and $2$, in the increasing order. Then elements of $C_2^{n}$ can be represented as strings of $0,1,2$ of length $n$. On the other hand, if we write $1$ and $2$ for the elements of $C_1$, then elements of each $C_1^j$ can be represented as strings of $1,2$ of length $j$. Embed $C_1^j$ into $C_2^{n}$ by adding in front $0$'s to fill up each string of length $j$ to a string of length $n$. A walk along a maximal chain in $\Sigma_nC^n_1$ now becomes a walk along a maximal chain in $C_2^{n}$ which passes through only those strings where a zero never follows a nonzero entry. Each time we make a step along such walk, we have two choices: either to increment any of the nonzero entries in the string, or to increment the right-most $0$ entry to $1$. This second choice corresponds to moving to the next cube, whereas the first one, moving up in the same cube. There are altogether $2n$ steps to make. Pair each $j$-th step of the first type to the $j'$-th step of the second type, at both steps $i$-th entry was incremented. Note that each step can only be incremented twice and the step of the first type incrementing $i$-th entry will always succeed step of the second type incrementing the $i$-th entry (first I would have to move to the cube that has the dimension along which I want to make a step, before such step can be made). The $2$-partition displayed above, in the case when $n=14$, will then give rise to the following walk:

$$
\begin{tikzpicture}[->,>=stealth',thick,scale=0.8]
	\node at(0,0){0000000};
	\node at(0,-0.5){0000001};
	\node at(0,-1){0000011};
	\node at(0,-1.5){0000111};
	\node at(0,-2){0000112};
	\node at(0,-2.5){0001112};
	\node at(0,-3){0011112};
	\node at(0,-3.5){0012112};
	\node at(0,-4){0012122};
	\node at(0,-4.5){0022122};
	\node at(0,-5){0122122};
	\node at(0,-5.5){0122222};
	\node at(0,-6){0222222};
	\node at(0,-6.5){1222222};
	\node at(0,-7){2222222};
	
	\foreach \i in {1,...,14}{
		\node [anchor=west] at(-2,-7.5+\i*0.5){\i};
		\node [anchor=west] at(1,0-\i*0.5){\textrm{(step \i)}};
		\tikzset{every node/.style={shape=circle,draw=black,fill=black,inner sep=1.5pt,outer sep=3pt}}
		\node [anchor=west] (\i) at(-2.5,-7.5+\i*0.5){};

	}
	
	\path [-,shorten >= -7pt, shorten <= -7pt] (11) edge [bend left] (14);
	\path [-,shorten >= -7pt, shorten <= -7pt] (8) edge [bend left] (10);
	\path [-,shorten >= -7pt, shorten <= -7pt] (7) edge [bend left] (13);
	\path [-,shorten >= -7pt, shorten <= -7pt] (6) edge [bend left] (9);
	\path [-,shorten >= -7pt, shorten <= -7pt] (4) edge [bend left] (12);
	\path [-,shorten >= -7pt, shorten <= -7pt] (3) edge [bend left] (5);
	\path [-,shorten >= -7pt, shorten <= -7pt] (1) edge [bend left] (2);
\end{tikzpicture}
$$ 

\noindent This argument easily generalizes, by replacing $C_1$ with $C_{m-1}$, to get a bijection between maximal chains in $\Sigma_nC^n_{m-1}$ and $m$-partitions of a set with $mn$ elements. We therefore get:

\begin{theorem}\label{ThmA}
For any natural number $m\geqslant 1$, there is a bijection between the set of maximal chains in the lattice $\Sigma_n C_{m-1}^n$ obtained by stacking $C_{m-1}^0\to C_{m-1}^1\to \dots\to C_{m-1}^n$, and the set of $m$-partitions of a set with $mn$ elements. Therefore,
$$\#\Sigma_n C_{m-1}^n=\frac{(mn)!}{(m!)^n n!}.$$ 
\end{theorem}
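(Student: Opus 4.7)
The plan is to make precise the embedding-and-labeling bijection sketched informally in the paragraph immediately preceding the theorem statement. First I would embed $\Sigma_nC_{m-1}^n$ inside $C_m^n$ by identifying the elements of $C_{m-1}$ with $\{1,2,\dots,m\}\subset\{0,1,\dots,m\}=C_m$ and embedding each $C_{m-1}^j$ into $C_m^n$ via prepending $n-j$ zeros. Under this identification, $\Sigma_nC_{m-1}^n$ becomes the subposet of $C_m^n$ consisting of those strings whose zeros form a prefix, and a maximal chain in $\Sigma_nC_{m-1}^n$ becomes a maximal chain in $C_m^n$ confined to such strings, running from $0^n$ to $m^n$ through $mn$ unit increments.

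To define the map to $m$-partitions, I would record at each step which coordinate is being incremented, but relabel coordinates by activation order: the first coordinate to leave $0$ (namely coordinate $n$) receives label $1$, the next (coordinate $n-1$) receives label $2$, and so on, up to label $n$ for coordinate $1$. Since each activated coordinate then takes the values $0,1,\dots,m$ via $m$ unit increments, each label is used on exactly $m$ steps. Collecting, for each label $i$, the set $B_i\subseteq\{1,\dots,mn\}$ of steps that increment it yields an unordered partition of $\{1,\dots,mn\}$ into $n$ blocks of size $m$, i.e., an $m$-partition.

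For the inverse, given an $m$-partition $\{B_1,\dots,B_n\}$ I would order its blocks so that $\min B_1<\min B_2<\dots<\min B_n$ and build the chain by declaring that step $t$ increments the coordinate labeled by the unique block containing $t$. The key verification is that this produces a legal chain in $\Sigma_nC_{m-1}^n$: the cardinality condition $|B_i|=m$ ensures that no coordinate is incremented beyond $m$, while the ordering $\min B_1<\dots<\min B_n$ forces labels to be activated in the order $1,2,\dots,n$, which matches the right-to-left order of coordinates and is precisely the condition that zeros remain a prefix throughout the walk. The two constructions are then readily seen to be mutually inverse.

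The main obstacle I anticipate is purely notational, namely keeping three orderings straight --- the positional ordering of coordinates, the activation ordering induced by labels, and the temporal ordering of steps --- so that the compatibility between $\min B_1<\dots<\min B_n$ and the right-to-left activation rule is transparent. Once the bijection is in place, the enumerative conclusion $\#\Sigma_nC_{m-1}^n=\frac{(mn)!}{(m!)^n n!}$ is just the standard count of set partitions of an $mn$-element set into $n$ unordered blocks of size $m$.
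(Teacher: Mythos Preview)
Your proposal is correct and follows essentially the same approach as the paper: the paper's proof consists only of the sentence ``This argument easily generalizes, by replacing $C_1$ with $C_{m-1}$,'' referring to the embedding $\Sigma_nC_{m-1}^n\hookrightarrow C_m^n$ described just before the theorem, and your write-up makes exactly this generalization explicit. Your relabeling by activation order is not strictly needed (the activation order is always $n,n-1,\dots,1$, so it is a fixed reindexing), but it does make the description of the inverse map cleaner than in the paper, which does not spell out the inverse at all.
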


\section{Iterated stacking}\label{SecB}

The process of stacking lattices in a sequence can be iterated: the stacked lattices produce a sequence of lattices, whose members can be stacked. We write $\Sigma_n^kL_n$ for the result of $k$-th iteration, with $\Sigma_n^1L_n=\Sigma_nL_n$ and $\Sigma_n^0L_n=L_n$. Each homomorphism $\Sigma_n^kL_n\to \Sigma_{n+1}^kL_{n+1}$ used for the next iteration is given by the universal property of lax colimit and the homomorphism $\Sigma_n^{k-1}L_n\to~\Sigma_{n+1}^{k-1}L_{n+1}$ from the previous iteration (see Section~\ref{sec remarks}). The sequence $C_1,C_2,\dots$ of chains, with the inclusions $C_i\to C_{i+1}$ we have been considering, can be obtained by stacking copies of the trivial chain $C_0$ along the identity maps $C_0\to C_0\to\dots \to C_0$. Thus,
$$\Sigma_n C_0=C_n.$$ 
We then get
$$\#\Sigma^2_n C_0=\#\Sigma_n C_n=\frac{(2n)!}{n!(n+1)!}.$$
A natural question arises: what happens if we go higher in iteration? Geometric inspection of $\Sigma^k_n C_0$ shows that it is isomorphic to the portion of $C_n^k$ consisting of points with non-increasing coordinates; that is, all points whose coordinates satisfy
\[x_1\leqslant x_2\leqslant\dots\leqslant x_n.\] Hence maximal chains in $\Sigma^k_n C_0$ are counted by $$\#\Sigma^k_n C_0=\frac{(k-1)!(kn)!}{n!(n+1)!\dots(n+k-1)!},$$
nothing other than $k$-dimensional Catalan numbers \cite{SnoTro89}. 

So then, what do we get if we iterate stacking of hypercubes? Adopting ideas from the discussion before Theorem~\ref{ThmA}, we can represent each $\Sigma^k_n C^n_m$ as a subposet of $C^n_{k+m}$, consisting of those elements whose coordinates $(x_1,\dots,x_n)$ satisfy the following condition: 
\begin{itemize}
\item[($\ast$)] for each $i\in\{1,\dots,n-1\}$, if $x_{i+1}\in\{0,\dots,k-1\}$ then $x_i\leqslant x_{i+1}$.
\end{itemize} 
This representation turns out to be an embedding of lattices:

\begin{theorem}[representation of $k$-iterated stacking of $m$-fold hypercubes]\label{ThmD}
The poset $\Sigma^k_n C^n_m$ is isomorphic to the sublattice of $C^n_{k+m}$ consisting of those elements that satisfy ($\ast$).  
\end{theorem}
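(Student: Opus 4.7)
The plan is to induct on $k$. The base case $k=0$ is immediate: since $\{0,\dots,k-1\}=\emptyset$, condition $(\ast)$ is vacuous, so the sublattice is all of $C^n_m=\Sigma^0_n C^n_m$. For the inductive step, I would first unfold the iterated lax colimit explicitly. Using the standard description (to be justified in Section~\ref{sec remarks}) of the lax colimit of $L_0\to L_1\to\dots\to L_n$ as the disjoint union $\bigsqcup_i L_i$ with $(i,x)\leq(i',x')$ iff $i\leq i'$ and the structure-map image of $x$ is $\leq x'$ in $L_{i'}$, iteration yields the parametrization: elements of $\Sigma^k_n C^n_m$ are tuples $(i_k,i_{k-1},\dots,i_1,w)$ with $0\leq i_1\leq\dots\leq i_k\leq n$ and $w\in C^{i_1}_m$. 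Adopting the convention that each base map $C^j_m\to C^{j+1}_m$ prepends a $0$ (as in the embedding discussed just before Theorem~\ref{ThmA}), and observing that the induced maps between successive lax colimits act as the identity on underlying-set elements, the resulting order unfolds to $(i_k,\dots,i_1,w)\leq(i'_k,\dots,i'_1,w')$ iff $i_s\leq i'_s$ for all $s$ and $w_t\leq w'_{t+(i'_1-i_1)}$ for $t=1,\dots,i_1$.

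Next, I would define the candidate map
$$\phi(i_k,\dots,i_1,w)=(\underbrace{0,\dots,0}_{n-i_k},\underbrace{1,\dots,1}_{i_k-i_{k-1}},\dots,\underbrace{k-1,\dots,k-1}_{i_2-i_1},\, w_1+k,\dots,w_{i_1}+k)$$
and verify it bijects onto the set $S_{k,n,m}$ of tuples in $C^n_{k+m}$ satisfying $(\ast)$. A direct analysis of $(\ast)$ shows that any such tuple decomposes as a weakly increasing prefix with entries in $\{0,\dots,k-1\}$ followed by an arbitrary suffix in $\{k,\dots,k+m\}$; counting the positions of each value $j\in\{0,\dots,k-1\}$ in the prefix recovers the $i_j$'s, and shifting the suffix down by $k$ recovers $w$. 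Order preservation and reflection follow from the clean identity that, for each $t\in\{1,\dots,k\}$, the number of positions $s$ with $\phi(\dots)_s\geq t$ equals $i_{k-t+1}$. This lets me read off all inequalities $i_j\leq i'_j$ from componentwise comparison of $\phi$-images, while the last $i_1$ coordinates encode precisely the shifted comparison of the $w$-parts. The converse direction splits into the three cases $s\leq n-i'_1$, $n-i'_1<s\leq n-i_1$, and $s>n-i_1$, in each of which the required coordinatewise inequality is a short computation.

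Finally, I would verify that $S_{k,n,m}$ is closed under componentwise meet and join in $C^n_{k+m}$, so that it is indeed a sublattice and $\phi$ is a lattice isomorphism. The join is direct: if $\max(x_{i+1},y_{i+1})<k$ then both $x_{i+1},y_{i+1}<k$, and applying $(\ast)$ to $x$ and to $y$ gives $\max(x_i,y_i)\leq\max(x_{i+1},y_{i+1})$. The meet case requires a small case-split on which of $x_{i+1},y_{i+1}$ realises the minimum, each sub-case reducing to a single application of $(\ast)$. The main obstacle in this approach is the bookkeeping of the iterated lax colimit: correctly identifying the structure maps between successive $\Sigma^j$-layers as simple inclusions, and tracking how the index shift $t\mapsto t+(i'_1-i_1)$ in the order exactly matches the coordinatewise comparison of $\phi$-images. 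A conceptually cleaner route, alluded to in the paper's roadmap, would be to decompose the lax colimit via \emph{lax pushouts}, thereby replacing this combinatorial bookkeeping with a purely categorical reduction.
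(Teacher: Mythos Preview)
Your argument is correct, but it follows a genuinely different route from the paper's own proof in Section~\ref{sec proof}. You unwind the iterated lax colimit directly to obtain the explicit parametrisation $(i_k,\dots,i_1,w)$ and then write down the isomorphism $\phi$ by formula; the verification that $\phi$ is an order isomorphism onto the $(\ast)$-sublattice then becomes the index-chasing you describe (and your three-case split on $s$ is exactly what is needed). The paper, by contrast, never writes $\phi$ explicitly. Instead it introduces the notion of \emph{lax pushout}, proves that every square in Figure~\ref{FigD} is a lax pushout (Lemma~\ref{Lem lax pushout matrix}), then sets up a parallel grid of lattices $C^{n\ast}_{k+m}$ with maps $(x_1,\dots,x_n)\mapsto(x_1{+}1,\dots,x_n{+}1)$ vertically and $(x_1,\dots,x_n)\mapsto(0,x_1,\dots,x_n)$ horizontally, and checks that each of \emph{those} squares is also a lax pushout (Lemma~\ref{Lem lax pushout}). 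Since the two grids agree along the top row and left column (where $k=0$ or $n=0$), uniqueness of lax pushouts up to canonical isomorphism propagates the identification through the whole grid.

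What each approach buys: yours is self-contained and yields an explicit coordinate description of the isomorphism, at the cost of the bookkeeping you flag (tracking that the induced maps $f'_j$ act as inclusions, and that the index shift $t\mapsto t+(i'_1-i_1)$ matches the componentwise comparison). The paper's approach trades that bookkeeping for a single local verification (Lemma~\ref{Lem lax pushout}) repeated uniformly, and the same machinery is then reused verbatim for the row analogue, Theorem~\ref{ThmE}. Your closing remark already anticipates this; the closure of the $(\ast)$-tuples under meet and join is handled identically in both approaches.
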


The formal proof of this theorem is given in Section~\ref{sec proof}.

Note that the Dyck situation and the hypercube one come together with $\Sigma^k_n C^n_m$: we get the first by letting $m=0$ and $k=2$, and the second by letting $k=1$ and $m=1$. The representation from the theorem above has been used to create drawings of stacked lattices in Figures~\ref{Figure stacking trivial lattices},~\ref{Figure stacking hypercubes} and~\ref{Figure stacking m=2 cubes}. In each row of each figure, the hollow vertices represent points mapped from the previous term.

The case $k=0$ is trivial. This is when no stacking is taking place. So in this case,
$$\#\Sigma^0_n C^n_m=\frac{(mn)!}{(m!)^n}$$
is the well-known number of maximal chains of $C^n_m$. The proof is a straightforward.  
Walking up in the lattice $C^n_m$ requires
$mn$ many steps. Write the steps out in a sequence where the first $m$ many steps are from walk in the first dimension, the second $m$ many steps in the second dimension, etc. In each group, the order of steps is insignificant, so for each group, divide $(mn)!$ by all possible permutations of the group -- that is, by $m!$. There are $n$ many such divisions that need to take place.

\begin{figure}
\input{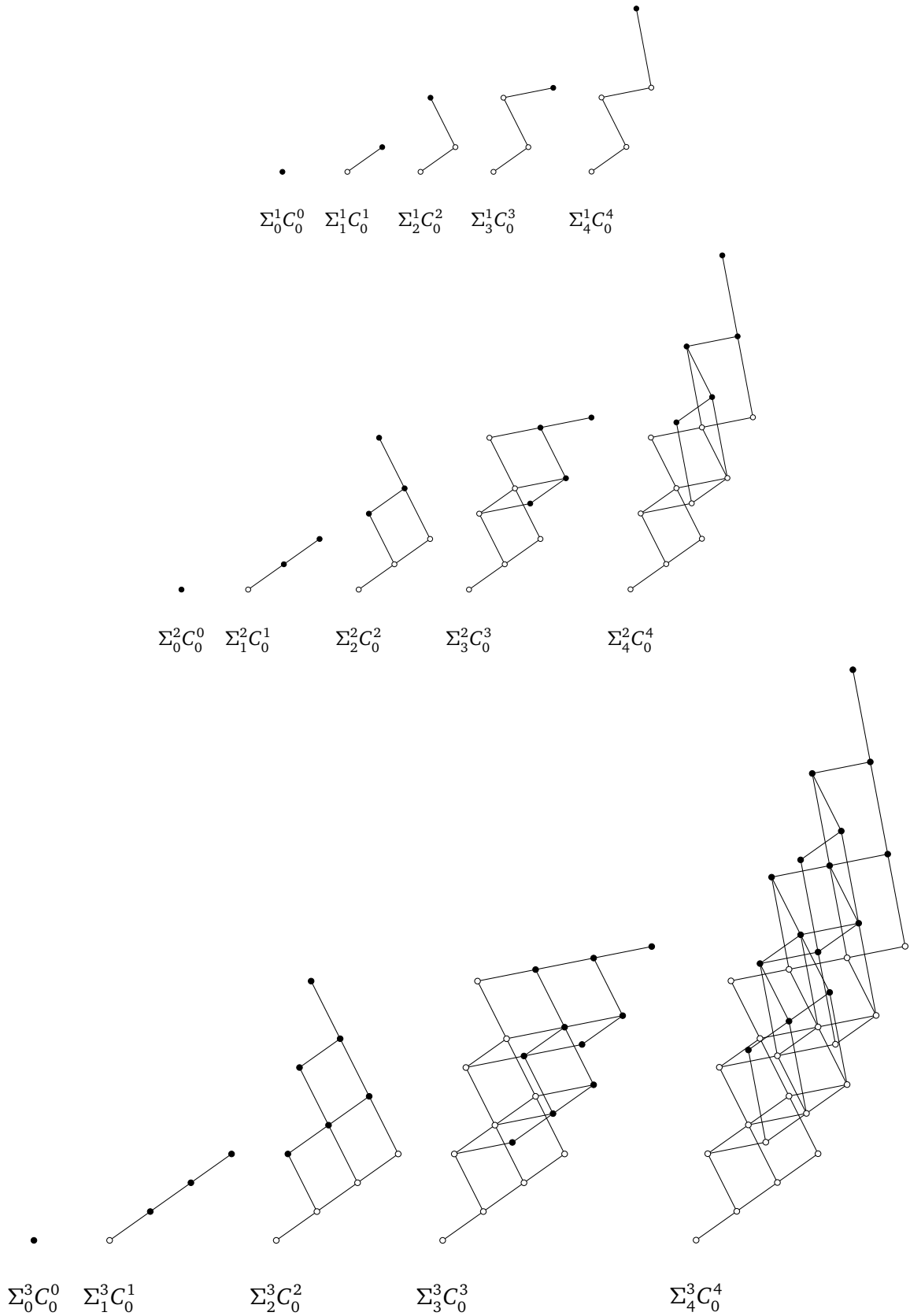}
\caption{First, second and third iteration of stacking the trivial lattice --- in the second and third iteration, maximal chains in these lattices are given by Catalan numbers and $3$-dimensional Catalan numbers, respectively.}\label{Figure stacking trivial lattices}
\end{figure}

\begin{figure}
\input{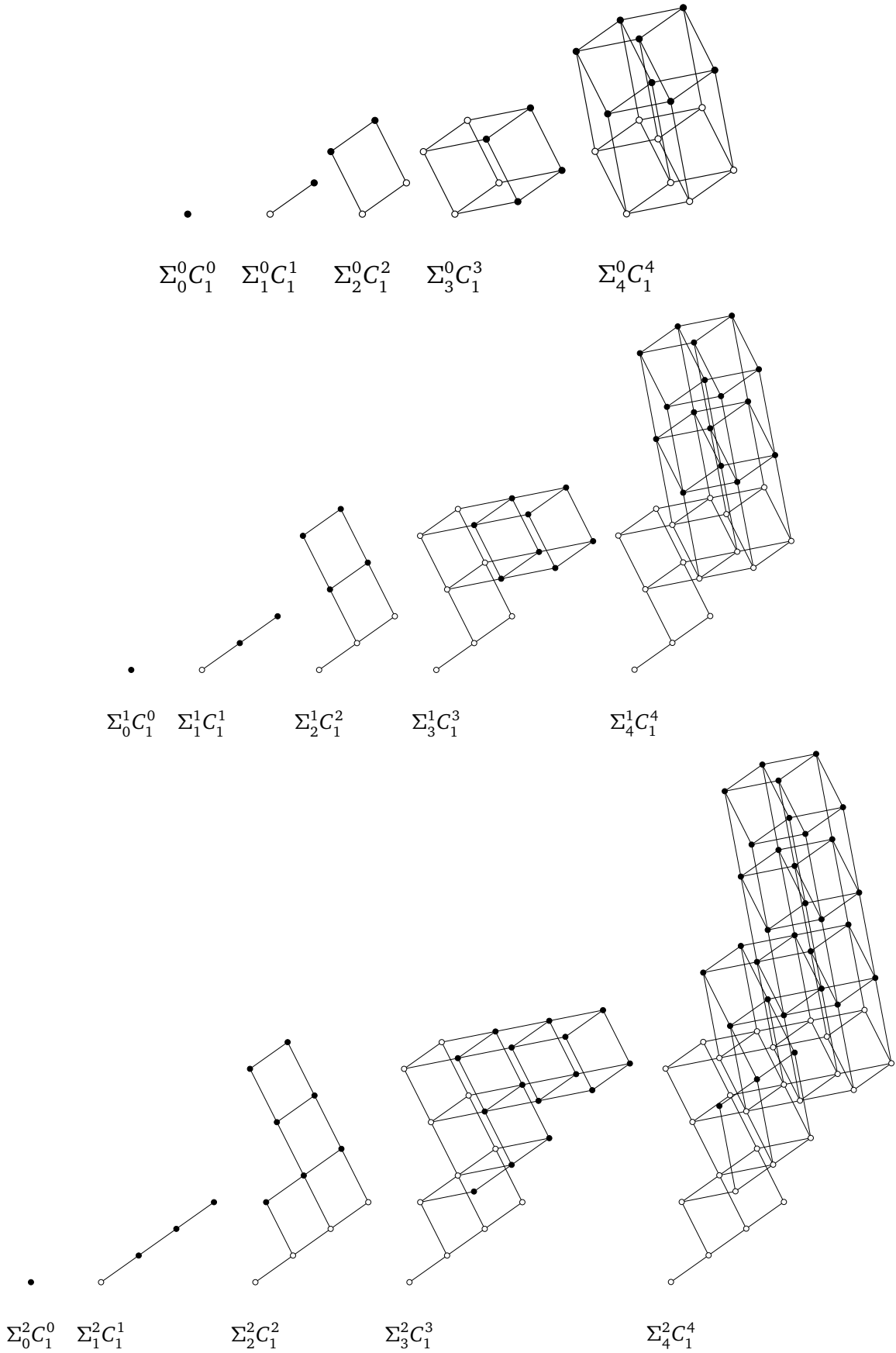}
\caption{Stacking hypercubes --- zeroth, first and second iteration.}\label{Figure stacking hypercubes}
\end{figure}

\begin{figure}
\input{hypercubes3}
\caption{
Lattices $\Sigma^k_nC^n_2$ for $k\in\{0,1,2\}$ and $n\in\{0,1,2,3,4\}$.}\label{Figure stacking m=2 cubes}
\end{figure}

\begin{figure}
$$\begin{array}{rl}
& n=0,1,2,3,4,5\dots\\\hline
m=0: \\\hline
k=0 & 1, 1, 1, 1, 1, 1, \dots \textrm{(A000012)}\\
k=1 & 1, 1, 1, 1, 1, 1, \dots \textrm{(A000012)}\\
k=2 & 1, 1, 2, 5, 14, 42, \dots \textrm{(A000108 -- Catalan numbers)}\\
k=3 & 1, 1, 5, 42, 462, 6006, \dots \textrm{(A005789 -- $3$-dimensional Catalan numbers)}\\
k=4 & 1, 1, 14, 462, 24024, 1662804, \dots \textrm{(A005790 -- $4$-dimensional Catalan numbers)}\\\hline
m=1:\\\hline
k=0 & 1, 1, 2, 6, 24, 120, \dots \textrm{(A000142 -- factorial numbers)}\\
k=1 & 1, 1, 3, 15, 105, 945, \dots \textrm{(A001147 -- double factorial numbers)}\\
k=2 & 1, 1, 7, 106, 2575, 87595, \dots \textrm{(A213863 -- no explicit formula)}\\
k=3 & 1, 1, 19, 1075, 115955, 19558470, \dots \textrm{(A213864 -- no explicit formula)}\\
k=4 & 1, 1, 56, 13326, 7364321, 7236515981, \dots \textrm{(A213865 -- no explicit formula)}\\\hline
m=2:\\\hline
k=0 & 1, 1, 6, 90, 2520, 113400, \dots \textrm{(A000680 -- $(2n)!/2^n$)}\\
k=1 & 1, 1, 10, 280, 15400, 1401400, \dots \textrm{(A025035 -- $(3n)!/(3!)^n n!$)}\\
k=2 & 1, 1, 25, 2305, 482825, 183500625, \dots \textrm{(no entry)}\\
k=3 & 1, 1, 71, 25911, 25754021, 52213860026, \dots \textrm{(no entry)}\\
k=4 & 1, 1, 216, 345651, 1848745731, 23070700145026, \dots \textrm{(no entry)}\\\hline
m=3:\\\hline
k=0 & 1, 1, 20, 1680, 369600, 168168000, \dots \textrm{(A014606 -- $(3n)!/(3!)^n$)}\\
k=1 & 1, 1, 35, 5775, 2627625, 2546168625, \dots \textrm{(A025036 -- $(4n)!/(4!)^n n!$)}\\
k=2 & 1, 1, 91, 51821, 94597041, 404793761526, \dots \textrm{(no entry)}\\
k=3 & 1, 1, 266, 621831, 5616763761, 134269580611026, \dots \textrm{(no entry)}\\
k=4 & 1, 1, 827, 8721245, 438307511209, 66953592509190248, \dots \textrm{(no entry)}\\\end{array}$$
\caption{The maximal chain numbers $\#\Sigma^k_n C^n_m$ and the corresponding entry code (indicated in brackets) in the On-Line Encyclopedia of Integer Sequences (accessed 9/08/2020).}\label{FigB}
\end{figure}

It appears that except those cases discussed in this paper, no explicit formulas are known for $\Sigma^k_n C^n_m$, and moreover, when $k>1$ and $m>1$, these combinatorial objects have not been considered in the literature. Figure~\ref{FigB} gives some of the numbers $\#\Sigma^k_n C^n_m$ generated by a computer. As indicated there (as well as in the Introduction), the numbers corresponding to the iterated stacking of hypercubes, i.e., the case when $m=1$, do show up on the On-Line Encyclopedia of Integer Sequences with an interpretation given in the following theorem. In the case when $k=2$, these combinatorial objects are in bijection with certain tree-child networks, as explained in \cite{FucYuZha20}. 

\begin{theorem}\label{ThmF}
There is a bijection between maximal chains in the lattice $\Sigma^k_n C^n_1$ and words $w$ of length $(k+1)n$ in an alphabet $\{a_1,\dots,a_{n}\}$ with $n$ distinct letters, such that each letter occurs $k+1$ times in the word, and for each prefix $z$ of $w$, either $a_i$ does not occur in $z$ or if it does, then for each $j>i$, it occurs more or the same number of times as $a_j$.   
\end{theorem}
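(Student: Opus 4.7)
The plan is to deduce the bijection directly from the representation Theorem~\ref{ThmD} specialised to $m=1$, which identifies $\Sigma^k_n C^n_1$ with the sublattice $L\subseteq C^n_{k+1}$ cut out by ($\ast$): for each $i<n$, if $x_{i+1}<k$ then $x_i\leqslant x_{i+1}$. A maximal chain in $L$ is a sequence of $N=(k+1)n$ unit increments from $(0,\ldots,0)$ to $(k+1,\ldots,k+1)$ passing only through vertices satisfying ($\ast$). I will encode such a chain by the \emph{time-reversed} record of its increments, setting $\tilde w=a_{i_N}a_{i_{N-1}}\cdots a_{i_1}$, where $i_t$ is the coordinate incremented at the $t$-th forward step. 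Then $\tilde w$ is manifestly a word of length $N$ in which each letter $a_i$ appears exactly $k+1$ times.

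Reading $\tilde w$ from left to right traces the chain backwards from $(k+1,\ldots,k+1)$: after consuming a prefix $z$ of $\tilde w$, one is at the vertex whose $i$-th coordinate equals $(k+1)-c_i(z)$, where $c_i(z)$ denotes the number of occurrences of $a_i$ in $z$. Substituting into ($\ast$) will show that this vertex lies in $L$ iff, at the prefix $z$,
\[(\ast\ast)\qquad c_{i+1}(z)\geqslant 2\implies c_i(z)\geqslant c_{i+1}(z)\qquad\text{for every } i<n.\]
Thus the reversal will furnish a bijection between maximal chains in $L$ and length-$N$ words on $\{a_1,\ldots,a_n\}$, each letter used $k+1$ times, satisfying ($\ast\ast$) at every prefix.

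The heart of the proof will then consist in checking that, on such complete words, the condition ($\ast\ast$) at every prefix is equivalent to the condition of the theorem,
\[(\dagger)\qquad c_i(z)>0\implies c_i(z)\geqslant c_j(z)\ \text{for every}\ j>i.\]
For the easier direction $(\ast\ast)\Rightarrow(\dagger)$, I would proceed by induction on $j-i$: if $c_j(z)\leqslant 1$ the inequality $c_i(z)\geqslant 1\geqslant c_j(z)$ follows from $c_i(z)>0$; if $c_j(z)\geqslant 2$, iterating $(\ast\ast)$ at indices $j-1,j-2,\ldots,i$ will yield in succession $c_{j-1}\geqslant c_j\geqslant 2$, $c_{j-2}\geqslant c_{j-1}\geqslant 2$, and ultimately $c_i\geqslant c_{i+1}\geqslant\cdots\geqslant c_j$. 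For the reverse direction $(\dagger)\Rightarrow(\ast\ast)$, given a prefix with $c_{i+1}(z)\geqslant 2$ I must rule out $c_i(z)=0$: using that $a_i$ eventually occurs $k+1\geqslant 1$ times in the complete word, there will be a later prefix $z'\supseteq z$ at which $a_i$ first appears, giving $c_i(z')=1$; applying ($\dagger$) at $z'$ then forces $c_{i+1}(z')\leqslant 1$, contradicting monotonicity $c_{i+1}(z')\geqslant c_{i+1}(z)\geqslant 2$.

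The main obstacle will be precisely this last step, $(\dagger)\Rightarrow(\ast\ast)$: it is a genuinely \emph{global} property of the word, since without the guaranteed eventual appearance of every letter the two conditions would not be prefix-wise equivalent; everything else reduces to routine bookkeeping once Theorem~\ref{ThmD} is invoked.
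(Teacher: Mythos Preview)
Your proposal is correct and follows essentially the same approach as the paper's own proof. Both invoke Theorem~\ref{ThmD} with $m=1$, translate the prefix condition on the word into the coordinate condition~($\ast$) (your $c_i(z)$ is just $(k{+}1)-x_i$, so your~($\ast\ast$) and~($\dagger$) are the paper's~($\ast$) and~($\ast'$) in complementary coordinates), observe that the two conditions are \emph{not} pointwise equivalent, and then show they agree on entire maximal chains: the forward implication by iterating along the index, and the delicate reverse implication by moving along the chain until the offending coordinate first changes (your ``first appearance of $a_i$'' is exactly the paper's ``first moment $x_i$ drops to $k$''). The time-reversal you introduce is a cosmetic repackaging of the paper's ``remaining-occurrences'' bookkeeping.
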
 

\begin{proof}
Let $w$ be such a word. As we read letters encountered in $w$ from left to right, record the letters that remain unused as an $n$-tuple $(x_1,\dots,x_{n})$, where $x_i$ is the number of remaining occurrences of $a_i$ in the rest of the word. The following example (where $n=2$ and $k=2$) illustrates this, where letters left to the dot are those that have been read.
$$
\begin{array}{r|l}
.a_2a_1a_1a_2a_1a_2 & (3,3)\\
a_2.a_1a_1a_2a_1a_2 & (3,2)\\
a_2a_1.a_1a_2a_1a_2 & (2,2)\\
a_2a_1a_1.a_2a_1a_2 & (1,2)\\
a_2a_1a_1a_2.a_1a_2 & (1,1)\\ 
a_2a_1a_1a_2a_1.a_2 & (0,1)\\
a_2a_1a_1a_2a_1a_2. & (0,0)\\
\end{array}
$$
With every next reading, exactly one of the terms in $(x_1,\dots,x_{n})$ decrements. We get a bijection between all words $w$ of length $(k+1)n$ in the alphabet $\{a_1,\dots,a_{n}\}$ and maximal chains in the lattice $C^n_{k+1}$ --- the points of the chain, as we descend down the chain, are given by the sequence of $n$-tuples $(x_1,\dots,x_n)$. If we could show that the condition ($\ast$) on $(x_1,\dots,x_{n})$ is equivalent to the requirement on the prefix $z$ read (which resulted in the tuple $(x_1,\dots,x_{n})$) given in the theorem, then we would be done. However, as we will now see, this equivalence actually fails. First, note the following: 
\begin{itemize}
\item $x_i=k+1$ if and only if $a_i$ does not occur in $z$.

\item $x_{i}\leqslant x_{j}$ if and only if the number of occurrences of $a_{i}$ in $z$ is more or the same as the number of occurrences of $a_{j}$.
\end{itemize}
So the requirement in the theorem on a prefix $z$ translates to the following requirement on the corresponding tuple $(x_1,\dots,x_{n})$. 
\begin{itemize}
\item[($\ast'$)] For each $i\in\{1,\dots,n\}$, either $x_i=k+1$ or $x_i\leqslant x_j$ for each $j>i$.
\end{itemize}
To see that ($\ast$) is not equivalent to ($\ast'$), consider the tuple $(3,0)$ (for $n=2$ and $k=2$). For this tuple, ($\ast'$) holds, but ($\ast$) does not. To see what to do next, we look at an illustration of each of the two types of tuples, seen as points in $C_{k+1}^n$, in the case when $n=2$ and $k=2$:

$$
\begin{tikzpicture}[scale=0.9]
		\pgfmathsetmacro{\hx}{7}
		\pgfmathsetmacro{\hy}{0}
		\pgfmathsetmacro{\ix}{0.5}
		\pgfmathsetmacro{\iy}{0.5}
		\pgfmathsetmacro{\kx}{-0.5}
		\pgfmathsetmacro{\ky}{0.5}
	\foreach \h in {0,1} {
		\ifthenelse{\h=0}{\node at(\hx*\h,\hy*\h-1){Tuples allowed by ($\ast$)}}{\node at(\hx*\h,\hy*\h-1){Tuples allowed by ($\ast'$)}};		
		\foreach \i in {0,...,3} {
				\foreach \k in {0,...,3} {
					\node (\i\k) at(\hx*\h+\ix*\i+\kx*\k,\hy*\h+\iy*\i+\ky*\k){};
					\ifthenelse{\i=3}{}{\path (\hx*\h+\ix*\i+\kx*\k,\hy*\h+\iy*\i+\ky*\k) edge (\hx*\h+\ix*\i+\ix+\kx*\k,\hy*\h+\iy*\i+\iy+\ky*\k);}
					\ifthenelse{\k=3}{}{\path (\hx*\h+\ix*\i+\kx*\k,\hy*\h+\iy*\i+\ky*\k) edge (\hx*\h+\ix*\i+\kx*\k+\kx,\hy*\h+\iy*\i+\ky*\k+\ky);}
					}
			}
	}
	\tikzset{every node/.style={shape=circle,draw=black,fill=black,inner sep=1.5pt}}			\node at(0,0){};
	\node at(-0.5,0.5){};
	\node at(-1,1){};
	\node at(-1.5,1.5){};
	\node at(0,1){};
	\node at(-0.5,1.5){};
	\node at(-1,2){};
	\node at(0,2){};
	\node at(-0.5,2.5){};
	\node at(0,3){};
	\node at(0.5,2.5){};

	\node at(\hx,0){};
	\node at(-0.5+\hx,0.5){};
	\node at(-1+\hx,1){};
	\node at(-1.5+\hx,1.5){};
	\node at(\hx,1){};
	\node at(-0.5+\hx,1.5){};
	\node at(-1+\hx,2){};
	\node at(\hx,2){};
	\node at(-0.5+\hx,2.5){};
	\node at(\hx,3){};
	\node at(0.5+\hx,2.5){};
	\node at(\hx+1.5,1.5){};
	\node at(\hx+1,2){};
\end{tikzpicture}
$$

\noindent To prove the theorem it is sufficient to show what these pictures suggest: that ($\ast$) implies ($\ast'$) for individual points (tuples), and points satisfying ($\ast'$) but not ($\ast$), can never be encountered on a maximal chain whose all points satisfy ($\ast'$).
   
Suppose ($\ast$) holds for a point $(x_1,\dots,x_n)$ and $x_i\neq k+1$ for some $i\in\{1,\dots,n\}$. Assume the contrary to what ($\ast'$) requires: that $x_i>x_j$ for some $j>i$. Since $x_i\leqslant k$, we get $x_j<k$ and iteratively applying ($\ast$) results in $x_i\leqslant x_{i+1}\leqslant\dots\leqslant x_j$. This contradicts the assumption $x_i>x_j$. So ($\ast'$) follows from ($\ast$). Conversely, suppose ($\ast'$) holds. Let $x_{i+1}\in\{0,\dots,k-1\}$ and suppose unlike what ($\ast$) requires, we have $x_i>x_{i+1}$. Then ($\ast'$) forces $x_i=k+1$. Suppose the point $(x_1,\dots,x_n)$ is encountered in some maximal chain. Then at some descend along the chain there is a point $(y_1,\dots,y_n)$ with either $y_{i+1}=x_{i+1}-1$ and $y_i=x_i$, or $y_{i+1}=x_{i+1}$ and $y_i=x_i-1=k$. The second case would violate ($\ast'$). For the first case, we repeat the same argument. Eventually, we end up with the second case, and so a maximal chain containing a point that satisfies ($\ast'$) but not ($\ast$), will also contain a point that does not satisfy ($\ast'$). This completes the proof. \end{proof}


\section{Some conceptual remarks on stacking lattices}\label{sec remarks}

As remarked in the Introduction, the process of stacking lattices in a sequence comes from a construction of `lax colimit' in category theory. In this section we elaborate a bit on this remark. We will be concerned with the \emph{$2$-category of posets}, which we denote by $\mathbf{Pos}$. Note that this is equivalent to a subcategory of the $2$-category of categories, consisting of those categories where any two parallel morphisms are equal and any isomorphism is an identity morphism. More explicitly: 
\begin{itemize}
\item Objects in $\mathbf{Pos}$ are posets --- partially ordered sets, i.e., sets equipped with a reflexive, transitive and antisymmetric binary relation.

\item Morphisms are monotone maps between posets, i.e., maps which preserve the relation. 

\item Composition of morphisms is defined by composition of maps.

\item For any two posets $L$ and $M$, the category structure on the set of morphisms from $L$ to $M$ is a poset structure given by setting $f\leqslant g$ when $f(x)\leqslant g(x)$ for each $x\in L$. In other words, a $2$-cell between two morphisms $f$ and $g$ is a relation $f\leqslant g$ (it either does not exist, or is unique, for a given $f$ and $g$).
\end{itemize}
 
Given a sequence

$$
	\begin{tikzpicture}
	\tikzset{every node/.style={outer sep=2pt}}
	\foreach \k in {0,...,6} {
		\node (\k) at(2*\k,0){\ifthenelse{\k<5}{$M_\k$}{\ifthenelse{\k=5}{$\dots$}{\ifthenelse{\k=6}{$M_n$}{}}}};
	}

	\path [->] (0) edge node [midway,above]{$f_{0}$} (1) ;
	\path [->] (1) edge node [midway,above]{$f_{1}$} (2) ;
	\path [->] (2) edge node [midway,above]{$f_{2}$} (3) ;
	\path [->] (3) edge node [midway,above]{$f_{3}$} (4) ;
	\path [->] (4) edge node [midway,above]{$f_{4}$} (5) ;
	\path [->] (5) edge node [midway,above]{$f_{n-1}$} (6) ;
	\end{tikzpicture}
$$

\noindent of objects and morphisms in $\mathbf{Pos}$, we define its \emph{lax sum} as a specialization of the notion of a lax colimit of a diagram in a $2$-category. It is given by an object $L=\Sigma_nM_n$ in $\mathbf{Pos}$, equipped with morphisms $\iota^n_j\colon M_j\to L$, where $j\in\{0,\dots,n\}$, such that the following conditions hold:
\begin{itemize}
\item $\iota^n_j\leqslant \iota^n_{j+1}\circ f_j$ for each $j\in\{1,\dots,n-1\}$.

\item For any object $L'$ and morphisms $\iota'_j$ (where $j\in\{0,\dots,n\}$) such that $\iota'_j\leqslant \iota'_{j+1}\circ f_j$ for each $j\in\{1,\dots,n-1\}$, there exists a unique morphism $u\colon L\to L'$  such that $u\circ \iota^n_j=\iota'_j$ for each $j$.
\end{itemize}
This property can be pictured as follows:

$$
	\begin{tikzpicture}
	\tikzset{every node/.style={outer sep=2pt}}
	\foreach \k in {0,...,5} {
		\node (\k) at(2*\k,0){\ifthenelse{\k<4}{$M_\k$}{\ifthenelse{\k=4}{$\dots$}{\ifthenelse{\k=5}{$M_n$}{}}}};
	}
	
	\node (L1) at (10,-2){$\Sigma_n M_n$} ;
	\node (L2) at (10,3){$L'$} ;
	
    \foreach \i in {0,...,3} {

    	\path [->] (\i) edge[bend right=20] node [near start,below]{$\iota^n_{\i}$} (L1) ;
    	\path [->] (\i) edge[bend left=20] node [near start,above]{$\iota'_{\i}$} (L2) ;
    	\draw [draw=none] (\i) -- node [near start,above]{$\hspace{\i em}\leqslant$} (L2) ;
    	\draw [draw=none] (\i) -- node [near start,below]{$\hspace{\i em}\leqslant$} (L1) ;
    }	
	
	\path [->] (0) edge node [near end,above]{$f_{0}$} (1) ;
	\path [->] (1) edge node [near end,above]{$f_{1}$} (2) ;
	\path [->] (2) edge node [near end,above]{$f_{2}$} (3) ;
	\path [->] (3) edge node [near end,above]{$f_{3}$} (4) ;
	\path [->] (4) edge node  [near end,above]{$f_{n-1}$} (5) ;
	\draw [draw=none] (4) -- node [midway,below]{\dots} (L1) ;
	\draw [draw=none] (4) -- node [near start,above]{\dots} (L2) ;
	\path [->] (5) edge node [near start,left]{$\iota^n_{n}$} (L1) ;
	\path [->] (5) edge node [near start,left]{$\iota'_{n}$} (L2) ;
	\path [->,densely dotted] (L1) edge[bend right=70] node [near start,right]{$u$} node [midway,left]{$\quad$} (L2) ;

	\end{tikzpicture}
$$

\noindent In the picture, each occurrence of the symbol $\leqslant$ stands for a $2$-cell between the composites of the surrounding diagram, indicating that the surrounding diagram \emph{lax commutes}. In other words, they represent the relations $\iota^n_{j+1}\circ f_j=\iota^n_j$ and $\iota'_{j+1}\circ f_j=\iota'_j$. 

As any object defined by a universal property, lax sum is unique up to an isomorphism: two lax sums of the same diagram will be connected by morphisms in both directions from the above universal property, which will turn out to be inverses of each other. 

Concretely, a lax sum of a sequence displayed above can be constructed as follows:
\begin{itemize}
    \item Start by taking the disjoint union of all the posets $M_j$. Let $(x,j)$ denote the representative of $x\in M_j$ in the disjoint union.
    
    \item To turn the disjoint union of posets into a poset, equip it with the relations $(x,j)\leqslant (y,j+i)$ for each $$(f_{j+i-1}\circ\dots\circ f_j)(x)\leqslant y.$$ This includes the possibility $i=0$, in which case $(f_{j+i-1}\circ\dots\circ f_i)(x)$ is defined as $x$. 
    
    \item  Each map $\iota^n_j$ is then defined by $\iota^n_j(x)=(x,j)$.
\end{itemize}
It is not difficult to show that this construction has the universal property required from a lax sum. We call this the \emph{concrete lax sum}. Note that a concrete lax sum always exists and any lax sum is canonically isomorphic to a concrete one.

There is also another (equivalent) conceptual interpretation of a lax sum. Thinking of the given sequence of posets as a functor from the chain $C_n$ seen as a category, into the category of categories, the lax sum is nothing other than the category (which in this case happens to be a poset) arising from the (dual of) standard Grothendieck construction in the theory of fibrations \cite{Gro61}. 

A sequence

$$
	\begin{tikzpicture}
	\tikzset{every node/.style={outer sep=2pt}}
	\foreach \k in {0,...,6} {
		\node (\k) at(2*\k,0){\ifthenelse{\k<5}{$M_\k$}{\ifthenelse{\k=5}{$\dots$}{\ifthenelse{\k=6}{}{}}}};
	}

	\path [->] (0) edge node [midway,above]{$f_{0}$} (1) ;
	\path [->] (1) edge node [midway,above]{$f_{1}$} (2) ;
	\path [->] (2) edge node [midway,above]{$f_{2}$} (3) ;
	\path [->] (3) edge node [midway,above]{$f_{3}$} (4) ;
	\path [->] (4) edge node [midway,above]{$f_{4}$} (5) ;
	\end{tikzpicture}
$$

\noindent of posets  and monotone maps (can be an infinite sequence or a finite one) gives rise to another such sequence, 

$$
	\begin{tikzpicture}
	\tikzset{every node/.style={outer sep=2pt}}
	\foreach \k in {0,...,6} {
		\node (\k) at(2*\k,0){\ifthenelse{\k<5}{$\Sigma_{\k}M_\k$}{\ifthenelse{\k=5}{$\dots$}{\ifthenelse{\k=6}{}{}}}};
	}

	\path [->] (0) edge node [midway,above]{$f'_{0}$} (1) ;
	\path [->] (1) edge node [midway,above]{$f'_{1}$} (2) ;
	\path [->] (2) edge node [midway,above]{$f'_{2}$} (3) ;
	\path [->] (3) edge node [midway,above]{$f'_{3}$} (4) ;
	\path [->] (4) edge node [midway,above]{$f'_{4}$} (5) ;
	\end{tikzpicture}
$$

\noindent where each $f'_n$ arises from the universal property of lax sum $\Sigma_nM_n$, as shown in the following picture:

$$
	\begin{tikzpicture}
	\tikzset{every node/.style={outer sep=2pt}}
	\foreach \k in {0,...,6} {
		\node (\k) at(2*\k,0){\ifthenelse{\k<4}{$M_\k$}{\ifthenelse{\k=4}{$\dots$}{\ifthenelse{\k=5}{$M_n$}{\ifthenelse{\k=6}{$M_{n+1}$}{}}}}};
	}
	
	\node (L1) at (10,-2){$\Sigma_n M_n$} ;
	\node (L2) at (12,3){$\Sigma_{n+1} M_{n+1}$} ;
	
    \foreach \i in {0,...,3} {

    	\path [->] (\i) edge[bend right=20] node [near start,below]{$\iota^n_{\i}$} (L1) ;
    	\path [->] (\i) edge[bend left=20] node [near start,above]{$\iota^{n+1}_{\i}$} (L2) ;
    	\draw [draw=none] (\i) -- node [near start,above]{$\hspace{\i em}\leqslant$} (L2) ;
    	\draw [draw=none] (\i) -- node [near start,below]{$\hspace{\i em}\leqslant$} (L1) ;
    }	
	
	\path [->] (0) edge node [near end,above]{$f_{0}$} (1) ;
	\path [->] (1) edge node [near end,above]{$f_{1}$} (2) ;
	\path [->] (2) edge node [near end,above]{$f_{2}$} (3) ;
	\path [->] (3) edge node [near end,above]{$f_{3}$} (4) ;
	\path [->] (4) edge node  [near end,above]{$f_{n-1}$} (5) ;
	\path [->] (5) edge node  [near end,above]{$f_{n}$} (6) ;
	\draw [draw=none] (4) -- node [midway,below]{\dots} (L1) ;
	\draw [draw=none] (4) -- node [near start,above]{\dots} (L2) ;
	\draw [draw=none] (4) -- node [near start,above]{\dots} (L2) ;
	\path [->] (5) edge node [near start,left]{$\iota^n_{n}$} (L1) ;
	\path [->] (5) edge node [near start]{$\iota^{n+1}_{n}\quad\leqslant$} (L2) ;
	\path [->] (6) edge node [near start,right]{$\iota^{n+1}_{n+1}$} (L2) ;
	\path [->,densely dotted] (L1) edge[bend right=70] node [near start,right]{$f'_{n}$} node [midway,left]{$\quad$} (L2) ;

	\end{tikzpicture}
$$

\noindent As a morphism between concrete lax sums, the definition of $f'_n$ is simple: $$f'_n(x,n)=(x,n),$$
for all $x\in M_n$.

Iterating the process above gives the diagram in Figure~\ref{FigD}, where we do not distinguish in notation the $\iota$'s arising at different iterations.
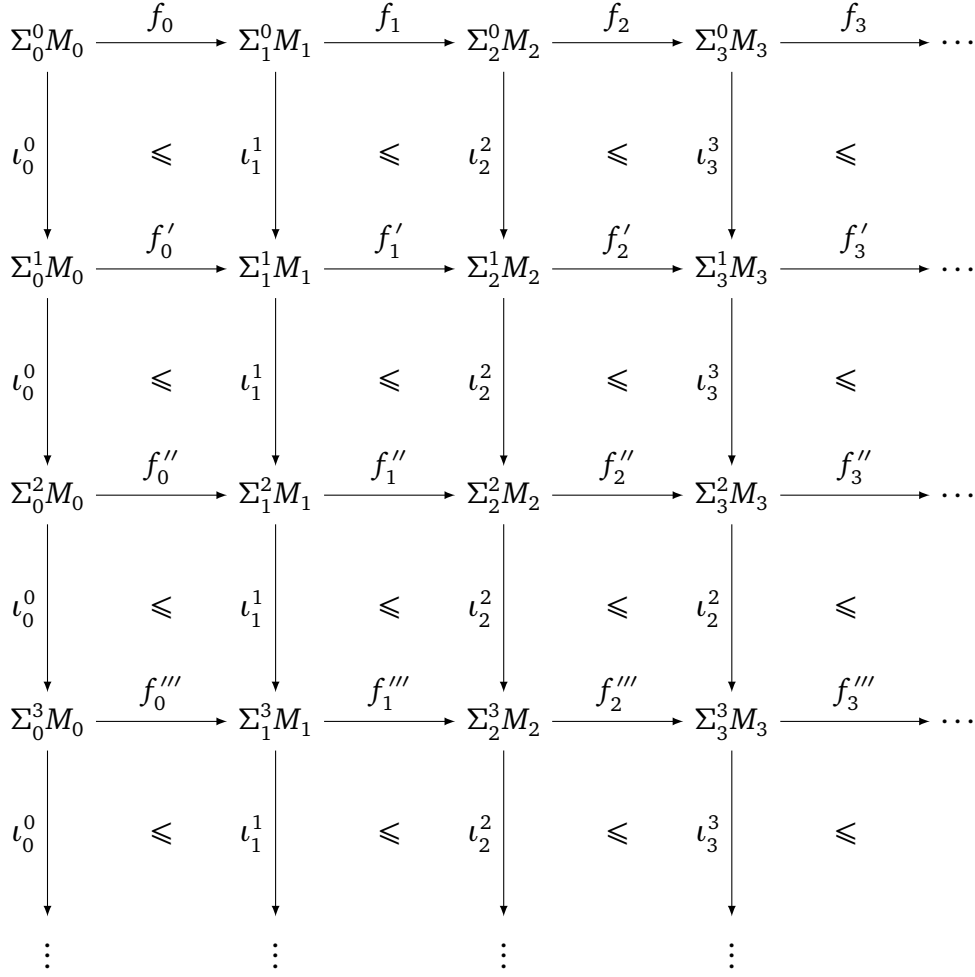
\begin{figure}
$$
\begin{tikzpicture}

		\pgfmathsetmacro{\ix}{3}
		\pgfmathsetmacro{\iy}{0}
		\pgfmathsetmacro{\jx}{0}
		\pgfmathsetmacro{\jy}{-3}
		\foreach \i in {0,...,3} {
			\foreach \j in {0,...,3} {
					\node (\i\j) at(\ix*\i+\jx*\j,\iy*\i+\jy*\j){$\Sigma^{\j}_{\i}M_{\i}$};
				}
			}
		\foreach \i in {0,...,3} {
			\foreach \j in {0,...,3} {
					\node  at(\ix*\i+\jx*\j+1.5,\iy*\i+\jy*\j-1.5){$\leqslant$};
				}
			}
		\foreach \i in {4} {
			\foreach \j in {0,...,3} {
					\node (\i\j) at(\ix*\i+\jx*\j,\iy*\i+\jy*\j){$\dots$};
				}
			}
		\foreach \i in {0,...,3} {
			\foreach \j in {4} {
					\node (\i\j) at(\ix*\i+\jx*\j,\iy*\i+\jy*\j){$\vdots$};
				}
			}
			\path [->] (00) edge node [left]{$\iota^{0}_{0}$} (01);
			\path [->] (01) edge node [left]{$\iota^{0}_{0}$} (02);
			\path [->] (02) edge node [left]{$\iota^{0}_{0}$} (03);
			\path [->] (03) edge node [left]{$\iota^{0}_{0}$} (04);
			\draw [->] (00) edge node [above]{$f_{0}$} (10);
			\draw [->] (10) edge node [above]{$f_{1}$} (20);
			\draw [->] (20) edge node [above]{$f_{2}$} (30);
			\draw [->] (30) edge node [above]{$f_{3}$} (40);

			\path [->] (10) edge node [left]{$\iota^{1}_{1}$} (11);
			\path [->] (11) edge node [left]{$\iota^{1}_{1}$} (12);
			\path [->] (12) edge node [left]{$\iota^{1}_{1}$} (13);
			\path [->] (13) edge node [left]{$\iota^{1}_{1}$} (14);
			\draw [->] (01) edge node [above]{$f'_{0}$} (11);
			\draw [->] (11) edge node [above]{$f'_{1}$} (21);
			\draw [->] (21) edge node [above]{$f'_{2}$} (31);
			\draw [->] (31) edge node [above]{$f'_{3}$} (41);
			
			\path [->] (20) edge node [left]{$\iota^{2}_{2}$} (21);
			\path [->] (21) edge node [left]{$\iota^{2}_{2}$} (22);
			\path [->] (22) edge node [left]{$\iota^{2}_{2}$} (23);
			\path [->] (23) edge node [left]{$\iota^{2}_{2}$} (24);
			\draw [->] (02) edge node [above]{$f''_{0}$} (12);
			\draw [->] (12) edge node [above]{$f''_{1}$} (22);
			\draw [->] (22) edge node [above]{$f''_{2}$} (32);
			\draw [->] (32) edge node [above]{$f''_{3}$} (42);
			
			\path [->] (30) edge node [left]{$\iota^{3}_{3}$} (31);
			\path [->] (31) edge node [left]{$\iota^{3}_{3}$} (32);
			\path [->] (32) edge node [left]{$\iota^{2}_{2}$} (33);
			\path [->] (33) edge node [left]{$\iota^{3}_{3}$} (34);
			\draw [->] (03) edge node [above]{$f'''_{0}$} (13);
			\draw [->] (13) edge node [above]{$f'''_{1}$} (23);
			\draw [->] (23) edge node [above]{$f'''_{2}$} (33);
			\draw [->] (33) edge node [above]{$f'''_{3}$} (43);

\end{tikzpicture}
$$
\caption{Iterated lax sums for a sequence $f_i\colon M_i\to M_{i+1}$ of poset morphisms}\label{FigD}
\end{figure}

We may think of $\Sigma_nM_n$ as the $n$-th `partial sum' of the infinite `series' of posets. Note that $\Sigma_0M_0=M_0$. The process of stacking lattices considered in this paper is given by this notion of `partial sum'. Recall that a lattice is a poset having binary joins and meets. The fact that in all situations considered in this paper, stacking lattices results in a lattice, comes from the following result: 

\begin{theorem}\label{ThmB} For any sequence $f_i\colon M_i\to M_{i+1}$ of posets, the following hold:
\begin{enumerate}
\item\label{for free} Each $f'_i\colon \Sigma_iM_i\to \Sigma_{i+1}M_{i+1}$ is order-reflecting (hence injective) and has down-closed image. Moreover, each $\iota^n_j\colon M_j\to \Sigma_nM_n$ is order-reflecting (and hence injective). 

\item\label{top} If $M_n$ has empty meet (top element), then so does $\Sigma_nM_n$, and $\iota^n_n\colon M_n\to \Sigma_nM_n$ preserves empty meet. 

\item\label{botpres} If each $M_i$ has empty join (bottom element) and each $f_i\colon M_i\to M_{i+1}$ preserves it, then each $\Sigma_nM_n$ has empty join and $\iota^n_0\colon M_0\to \Sigma_nM_n$, as well as each $f'_i\colon \Sigma_iM_i\to \Sigma_{i+1}M_{i+1}$, preserves empty join.

\item\label{joinpres} If each $M_i$ has binary joins and each $f_i\colon M_i\to M_{i+1}$ preserves them, then each $\Sigma_nM_n$ has binary joins, and moreover, each $f'_i\colon \Sigma_iM_i\to \Sigma_{i+1}M_{i+1}$, as well as each $\iota^n_j\colon M_j\to \Sigma_nM_n$, preserves binary joins. 

\item\label{downclose} If each $M_i$ has binary meets and each $f_i\colon M_i\to M_{i+1}$ is order-reflecting with down-closed image, then each $\Sigma_nM_n$ has binary meets and each $\iota^n_j\colon M_j\to \Sigma_nM_n$ preserves binary meets (as does each $f_i$ and $f'_i$).
\end{enumerate}
\end{theorem}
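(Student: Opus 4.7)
My plan is to work entirely with the concrete lax-sum description given in the excerpt: the underlying set of $\Sigma_n M_n$ is $\{(x,j):0\leqslant j\leqslant n,\ x\in M_j\}$ with $(x,j)\leqslant(y,k)$ iff $j\leqslant k$ and $f^{k-j}(x)\leqslant y$, where I write $f^{k-j}:=f_{k-1}\circ\cdots\circ f_j$ (with $f^0$ the identity). Then $\iota^n_j(x)=(x,j)$, and the universal map $f'_i$ is, concretely, the inclusion $(x,j)\mapsto(x,j)$ of $\Sigma_iM_i$ into $\Sigma_{i+1}M_{i+1}$. Once these formulas are in hand, each of the five items reduces to a direct check, so the proof will consist of going through them one by one.

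For (1), $\iota^n_j$ is order-reflecting because $f^0=\mathrm{id}$, and $f'_i$ is order-reflecting because the defining inequality $(x,j)\leqslant(y,k)$ for $j,k\leqslant i$ refers to the \emph{same} composite $f^{k-j}$ in either lax sum; down-closedness of the image of $f'_i$ is immediate from the fact that $(y,k)\leqslant(x,j)$ forces $k\leqslant j$. For (2), $(1_n,n)$ dominates every $(x,j)$ since $f^{n-j}(x)\leqslant 1_n$ is automatic. For (3), when each $f_i$ preserves the bottom one has $f^j(0_0)=0_j$, so $(0_0,0)$ is the bottom of $\Sigma_nM_n$, and $\iota^n_0$ and $f'_i$ both send $(0_0,0)$ to $(0_0,0)$.

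For (4), the candidate formula is $(x,j)\vee(y,k):=(f^{k-j}(x)\vee y,\ k)$ for $j\leqslant k$. The upper-bound check is immediate; for the universal property one uses that each $f_i$ preserves binary joins, hence so does every composite $f^{\ell-k}$, yielding $f^{\ell-k}(f^{k-j}(x)\vee y)=f^{\ell-j}(x)\vee f^{\ell-k}(y)\leqslant z$ for any common upper bound $(z,\ell)$. Preservation by $\iota^n_j$ follows from specialising to $k=j$, and preservation by $f'_i$ is automatic from the concrete formula.

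The meat of the proof is (5). First I will record the auxiliary lemma that any order-reflecting monotone map with down-closed image preserves existing binary meets: if $z\leqslant f(x),f(y)$, then down-closedness gives $z=f(z')$, order-reflection gives $z'\leqslant x,y$, hence $z'\leqslant x\wedge y$ and $z\leqslant f(x\wedge y)$. Applied to the $f_i$ (hypothesis) and their composites $f^{k-j}$ (composites of order-reflecting maps with down-closed images have the same properties), this yields the technical fact that $f^{k-j}$ preserves meets and, crucially, that for $w\in M_k$ with $w\leqslant f^{k-j}(x)$ there is a unique $w'\in M_j$ with $f^{k-j}(w')=w$. Using this, I define $(x,j)\wedge(y,k):=(w',j)$, where $w'$ is the unique preimage of $w:=f^{k-j}(x)\wedge y$ under $f^{k-j}$ (assuming $j\leqslant k$). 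Checking this is a lower bound uses order-reflection of $f^{k-j}$; checking it is the \emph{greatest} lower bound of $(x,j)$ and $(y,k)$ follows from: given $(z,\ell)$ below both, $f^{k-\ell}(z)=f^{k-j}(f^{j-\ell}(z))\leqslant f^{k-j}(x)\wedge y = f^{k-j}(w')$, and order-reflection then gives $f^{j-\ell}(z)\leqslant w'$. Preservation by $\iota^n_j$ comes from setting $k=j$ (so $w'=x\wedge y$); preservation by $f_i$ and $f'_i$ is then precisely the auxiliary lemma applied to these maps, which by (1) are order-reflecting with down-closed image. The main obstacle I anticipate is exactly this bookkeeping in (5) — ensuring that the hypotheses on $f_i$ propagate cleanly to composites $f^{k-j}$, to $f'_i$, and to $\iota^n_j$, so that the preimage $w'$ is well-defined and the universal property of the meet can be verified symmetrically.
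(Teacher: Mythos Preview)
Your proposal is correct and follows essentially the same approach as the paper: both work with the concrete lax-sum description, give the same explicit formulas for joins and meets, and for part~(5) both pull back the meet $f^{k-j}(x)\wedge y$ along the order-reflecting map $f^{k-j}$ with down-closed image to obtain the unique preimage in $M_j$. Your treatment is slightly more explicit in isolating the auxiliary lemma that order-reflecting monotone maps with down-closed image preserve existing binary meets, whereas the paper invokes this fact more tersely, but the substance is the same.
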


\begin{proof}
Recall that $\iota^n_j(x)=(x,j)$, and: $(x,j)\leqslant (y,k)$, when $j\leqslant k$ and $(f_{k-1}\circ\dots\circ f_j)(x)\leqslant y$. Furthermore, recall that each $f'_i\colon \Sigma_iM_i\to \Sigma_{i+1}M_{i+1}$ is defined by
$$f'_i(x,j)=(x,j),\quad j\leqslant i.$$
It is then clear that the image of each $f'_i$ is down-closed and order-reflecting, as well as that each $\iota^n_j$ is order-reflecting, so we have (\ref{for free}) --- note that an order-reflective monotone map is always injective.

When $M_n$ has top element $t$, the top element in $\Sigma_nM_n$ is given by $(t,n)$. This yields (\ref{top}).

Suppose $M_0$ has bottom element. If each $f_i$ preserves the bottom element, then $(b,0)$ is the bottom element of $\Sigma_{n}M_{n}$, where $b$ is the bottom element of $M_0$. This yields (\ref{botpres}).

Suppose now each $M_i$ has binary joins and each $f_i$ preserves them. Consider two elements $(x,j)$ and $(y,k)$ of $\Sigma_nM_n$, with $j\leqslant k$. Define
  $$(x,j)\vee (y,k)=((f_{k-1}\circ\dots\circ f_j)(x)\vee y,k),$$
where the join appearing on the right hand side of the equality is the join in $M_k$. We will now show that $(x,j)\vee (y,k)$ is the join of $(x,j)$ and $(y,k)$ in $\Sigma_nM_n$. It is easy to see that $(x,j)\leqslant (x,j)\vee (y,k)$ and $(y,k)\leqslant (x,j)\vee (y,k)$. Suppose $(x,j)\leqslant (z,l)$ and $(x,k)\leqslant (z,l)$. Then $k\leqslant l$ and 
  $$(f_{l-1}\circ\dots\circ f_j)(x)\vee (f_{l-1}\circ\dots\circ f_k)(y)\leqslant z.$$
On the other hand, since each $f$ preserves joins, 
  $$(f_{l-1}\circ\dots\circ f_k)((f_{k-1}\circ\dots\circ f_j)(x)\vee y)=(f_{l-1}\circ\dots\circ f_j)(x)\vee (f_{l-1}\circ\dots\circ f_k)(y).$$
This shows $(x,j)\vee (y,k)\leqslant (z,l)$. We have thus proved that $\Sigma_nM_n$ has binary joins. The formula for join in  $\Sigma_nM_n$ established above guarantees that each $f'_i$, as well as each $\iota^n_j$, preserve joins. This proves (\ref{joinpres}).

Suppose each $M_i$ has binary meets, and, each $f_i$ is order-reflecting with down-closed image. Then, each $f_i$ is injective and preserves binary meets. Thanks to (\ref{for free}), the same is true for each $f'_i$. We now show that each $\Sigma_nM_n$ has binary meets. Consider two elements $(x,j)$ and $(y,k)$ in $\Sigma_nM_n$. Then $x\in M_j$ and $y\in M_k$, where $j\leqslant n$ and $k\leqslant n$. Without loss of generality, assume $j\leqslant k$. Consider the meet
  $$z=(f_{k-1}\circ\dots\circ f_j)(x)\wedge y$$
in $M_k$. Let $z'$ be the unique element of $M_j$ such that   
  $$(f_{k-1}\circ\dots\circ f_j)(z')=z.$$
We will prove 
  $$(z',j)=(x,j)\wedge (y,k).$$
Note that $(z',j)\leqslant (y,k)$. 
Moreover, since $z\leqslant (f_{k-1}\circ\dots\circ f_l)(x)$, we get $z'\leqslant x$ by the order-reflection property of the  composite $f_{k-1}\circ\dots\circ f_l$. So $(z',j)\leqslant (x,j)$. Now suppose $(w,l)\leqslant (x,j)$ and $(w,l)\leqslant (y,k)$. Then 
$l\leqslant j$. Moreover, 
  $$(f_{j-1},\dots,f_l)(w)\leqslant x\textrm{ and }(f_{k-1},\dots,f_l)(w)\leqslant y.$$
This implies $(f_{k-1},\dots,f_l)(w)\leqslant z$, which by order-reflection gives 
  $$(f_{j-1},\dots,f_l)(w)\leqslant z'.$$
Then $(w,l)\leqslant(z',j)$, as desired. So $(z',j)$ is indeed the meet of $(x,j)$ and $(y,k)$. In the case when $j=k$, we get $z'=x\wedge y$, from which it follows at once that $\iota^n_j$ preserves binary meets. This proves (\ref{downclose}).
\end{proof}

The sequences of lattices that we dealt with in the paper satisfy all assumptions of the theorem above. Let us call such sequence of lattices a \emph{lattice series}. Thus, a lattice series is an infinite sequence 

$$
	\begin{tikzpicture}
	\tikzset{every node/.style={outer sep=2pt}}
	\foreach \k in {0,...,6} {
		\node (\k) at(2*\k,0){\ifthenelse{\k<5}{$M_\k$}{\ifthenelse{\k=5}{$\dots$}{\ifthenelse{\k=6}{}{}}}};
	}

	\path [->] (0) edge node [midway,above]{$f_{0}$} (1) ;
	\path [->] (1) edge node [midway,above]{$f_{1}$} (2) ;
	\path [->] (2) edge node [midway,above]{$f_{2}$} (3) ;
	\path [->] (3) edge node [midway,above]{$f_{3}$} (4) ;
	\path [->] (4) edge node [midway,above]{$f_{4}$} (5) ;
	\end{tikzpicture}
$$

\noindent of lattices and homomorphisms of join semi-lattices, such that $f_i$'s are order-reflecting and have down-closed images, and hence also preserve binary meets and are injective. A lattice series is a model of the self-dual axiomatic context for isomorphism theorems described in \cite{GosJan19}: it is an example of a `noetherian form' \cite{VNi19}. The following construction, used already in the proof of the theorem above, is related to `diagram chasing' in a noetherian form. We introduce below a short-hand notation for it.

For $x\in M_j$ and $k\geqslant j$, define
  $$x^k=\left\{\begin{array}{ll} x, & k=j,\\ (f_{k-1}\circ\dots\circ f_j)(x), & k>j.\end{array}\right.$$
For $y\in M_k$, if $y=(f_{k-1}\circ\dots\circ f_j)(x)$ for $x\in M_j$ and $j\le k$,  then this $x$, necessarily unique, will be denoted by $x=y^{-j}$.
According to Theorem~\ref{ThmB} above, the sequence of partial sums of a lattice series is again a lattice series. The proof of Theorem~\ref{ThmB} shows that the joins and the meets in the poset $\Sigma_nM_n$ can be expressed in terms of joins and meets in each $M_i$ as follows:
\begin{itemize}
    \item $(x,j)\vee(y,k)=(x^k\vee y,k)$ when $j\leqslant k$,
    \item $(x,j)\wedge(y,k)=((x^k\wedge y)^{-j},j)$ when $j\leqslant k$. 
\end{itemize}
We furthermore have the following rules; the last three of these rules are consequences of the properties of the homomorphisms $f_i$ in a lattice series:
\begin{itemize}
    \item $(x^j)^k=x^k$ when $j\leqslant k$ and $x\in M_i$, where $i\leqslant j$,

    \item $(x^{-i})^j=x^{-j}$ when $i\leqslant j\leqslant k$ and $x\in M_k$ is such that $x^{-i}$ is defined,
    
    \item $(x^{k})^{-j}=x^{j}$ when $i\leqslant j\leqslant k$ and $x\in M_i$,
    
    \item $(x^{k})^{-i}=x^{-i}$ when $i\leqslant j\leqslant k$ and $x\in M_j$ with $x^{-i}$ defined,
    
    \item $(x\wedge y)^{j}=x^{j}\wedge y^{j}$ when $x,y\in M_i$ with $i\leqslant j$,

    \item $(x\vee y)^{j}=x^{j}\vee y^{j}$ when $x,y\in M_i$ with $i\leqslant j$,
    
    \item $(x\vee y)^{-j}=x^{-j}\vee y^{-j}$ when $x,y\in M_k$ with $j\leqslant k$, and $(x\vee y)^{-j}$ is defined.
\end{itemize}
With these rules we can establish the following:

\begin{theorem}\label{ThmC}
  If in a lattice series, each lattice is a distributive lattice, i.e., the identity 
    $$x\wedge(y\vee z)=(x\wedge y)\vee (x\wedge z)$$
  holds in it, then each partial sum of the series is also a distributive lattice.
\end{theorem}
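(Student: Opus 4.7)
The plan is to embed the partial sum $\Sigma_n M_n$ as a sublattice of the product distributive lattice $M_n\times C_n$, where we think of $C_n=\{0,1,\dots,n\}$ as the chain of indices, via the map
\[
\phi\colon\Sigma_n M_n\longrightarrow M_n\times C_n,\qquad \phi(x,j)=(x^n,j).
\]
Since $M_n$ is distributive by assumption and every chain is distributive, $M_n\times C_n$ is distributive, and any sublattice of a distributive lattice is distributive. So it suffices to show that $\phi$ is an injective lattice homomorphism whose image is closed under the binary meet and join of $M_n\times C_n$.

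Injectivity is immediate: if $\phi(x,j)=\phi(y,k)$ then $j=k$ and $x^n=y^n$, and since $f_{n-1}\circ\cdots\circ f_j$ is order-reflecting (Theorem~\ref{ThmB}(\ref{for free})), hence injective, we get $x=y$. For the sublattice property, assume without loss of generality $j\leqslant k$ and consider $(x^n,j),(y^n,k)\in\operatorname{im}\phi$. The join in $M_n\times C_n$ is $(x^n\vee y^n,k)$; writing $x^n=(x^k)^n$ and using that $M_k\to M_n$ preserves joins yields $x^n\vee y^n=(x^k\vee y)^n$, which lies in the image of $M_k\to M_n$. For the meet, a short composition argument using Theorem~\ref{ThmB}(\ref{for free}) shows that the image of $M_j\to M_n$ is itself order-reflecting and down-closed, so $x^n\wedge y^n\leqslant x^n$ still lies in that image; hence $(x^n\wedge y^n,j)\in\operatorname{im}\phi$.

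The last step is to check that $\phi$ transports the operations of $\Sigma_n M_n$ to those of the ambient product, which is a straightforward calculation using the formulas recorded just before the theorem together with the rules $(x^k)^n=x^n$ and $((x^k\wedge y)^{-j})^n=(x^k\wedge y)^n$:
\[
\phi((x,j)\vee(y,k))=((x^k\vee y)^n,k)=(x^n\vee y^n,k),
\]
\[
\phi((x,j)\wedge(y,k))=(((x^k\wedge y)^{-j})^n,j)=(x^n\wedge y^n,j),
\]
both matching the operations in $M_n\times C_n$. Thus $\Sigma_n M_n$ is isomorphic to a sublattice of a distributive lattice and is therefore distributive. The only real thing to be careful about is the notational bookkeeping with the operators $(-)^k$ and $(-)^{-j}$; this is not a conceptual obstacle, only a matter of invoking the right identities from the list preceding the theorem.
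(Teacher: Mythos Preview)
Your proof is correct and takes a genuinely different route from the paper's. The paper verifies the self-dual distributivity identity $(x\wedge y)\vee(y\wedge z)\vee(z\wedge x)=(x\vee y)\wedge(y\vee z)\wedge(z\vee x)$ directly on three generic elements $(x,j),(y,k),(z,l)$ of $\Sigma_n M_n$, via a chain of about a dozen equalities using the calculus of $(-)^k$ and $(-)^{-j}$. Your argument instead observes that the map $(x,j)\mapsto(x^n,j)$ is an injective lattice homomorphism into the distributive lattice $M_n\times C_n$, so $\Sigma_n M_n$ sits as a sublattice of a distributive lattice and is therefore distributive. Each approach has its merits: the paper's computation is self-contained and exercises the $(-)^k$/$(-)^{-j}$ rules it just introduced, while your embedding is shorter, more conceptual, and yields the extra structural fact that every partial sum of the series embeds as a sublattice of $M_n\times C_n$. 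One minor redundancy: once you have verified that $\phi$ preserves joins and meets (your last displayed computations), the separate check that the image is closed under the ambient operations is automatic, so that middle paragraph could be dropped.
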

\begin{proof}
A lattice is distributive if and only if the identity \[(x\land y)\lor (y\land z)\lor(z\land x)=(x\lor y)\land(y\lor z)\land(z\lor x)\] holds in it (see e.g.~\cite{Bir48}). Since this identity is symmetric in $x,y,z$, to establish it in the partial sum of a lattice series it is sufficient to prove it for $(x,j),(y,k),(z,l)$ where $j\leqslant k\leqslant l$. This can be done as follows:
  \begin{align*}
    &((x,j)\lor(y,k))\land((y,k)\lor (z,l))\land((z,l)\lor (x,j))\\
    &=(x^k\lor y, k)\land(y^l\lor z, l)\land(z\lor x^l,l)\\
    &=(x^k\lor y, k)\land((y^l\lor z)\land(z\lor x^l), l)\\
    &=(((x^k\lor y)^l\land(y^l\lor z)\land(z\lor x^l))^{-k}, k)\\
    &=(((x^l\lor y^l)\land(y^l\lor z)\land(z\lor x^l))^{-k}, k)\\
    &=(((x^l\land y^l)\lor(y^l\land z)\lor(z\land x^l))^{-k}, k)\\
    &=((x^l\land y^l)^{-k}\lor(z\land x^l)^{-k}\lor(y^l\land z)^{-k}, k)\\
    &=(((x^l\land y^l)^{-j}\lor(z\land x^l)^{-j})^k\lor(y^l\land z)^{-k}, k)\\
    &=(((x^k\land y)^{-j}\lor(z\land x^l)^{-j})^k\lor(y^l\land z)^{-k}, k)\\
    &=((x^k\land y)^{-j}\lor(z\land x^l)^{-j}, j)\lor((y^l\land z)^{-k},k)\\
    &=((x^k\land y)^{-j}, j)\lor ((y^l\land z)^{-k}, k)\lor((z\land x^l)^{-j},j)\\
    &=((x,j)\land(y,k))\lor((y,k)\land (z,l))\lor((z,l)\land (x,j)).\qedhere
  \end{align*}
\end{proof}

All lattices in Figure~\ref{FigA} are distributive. Moreover, all maps there are join-preserving and order-reflecting, with down-closed images. So Theorems~\ref{ThmB} and \ref{ThmC} allow us to conclude that iterated stacking of these lattices along rows or columns (or in fact, along any path of consecutive arrows, for that matter), will always give rise to distributive lattices. Note that the fact that the lattices in Figure~\ref{FigA} are distributive itself follows from the theorems above: $C_m$ can be obtained by staking $C_0$ along identity morphisms;  then, $C^n_m$ is a cartesian power of a distributive lattice, and so is distributive.

\begin{remark}
The link with noetherian forms briefly mentioned above suggest to look for lattice series that arise as series of lattices of substructures of group-like structures, given by sequences of group homomorphisms. A combinatorial investigation of such lattice series, similar to what we did in the present paper for the columns of Figure~\ref{FigA}, would be interesting. These lattices would be far from being distributive, unlike the ones considered in the present paper. Lattice series of the present paper all live inside the `noetherian form of distributive lattices', studied in depth in \cite{Gra12}. The series $C^\infty_1$ can be realized as a series of substructures in another noetherian form --- that of sets and partial bijections: the lattice $C^n_1$ is of course nothing other than the lattice (Boolean algebra) of all subsets of an $n$-element set. 
\end{remark}

\section{Proof of Theorem~\ref{ThmD}}\label{sec proof}

The proof of Theorem~\ref{ThmD} that we present below relies on further analysis of the concept of lax sum. We will show first that each of the squares in Figure~\ref{FigD} is a `co-comma' diagram of posets seen as categories, which we simply call a `lax pushout'; note however that it is not an instance of a lax colimit --- rather, it is an instance of (another) type of `indexed colimit' (also called a `weighted colimit') in a $2$-category \cite{Kel05}.  This will provide another way of constructing the posets $\Sigma^k_nM_n$: the morphisms $\iota^0_0$ are identity morphisms and so we can start by forming the top left square, the one next to it, and so on in each row successively. 

A diagram

$$
\begin{tikzpicture}
	\node (K) at(0,2){$K$};
	\node (M) at(2,2){$M$};
	\node (L) at(2,0){$L$};
	\node (N) at(0,0){$N$};

	\path [->] (K) edge node [above]{$f$} (M);
	\path [->] (M) edge node [right]{$g'$} (L);
	\path [->] (K) edge node [left]{$g$} (N);
	\path [->] (N) edge node [below]{$f'$} (L);
\end{tikzpicture}
$$ 

\noindent in $\mathbf{Pos}$ is a \emph{lax pushout} if and only if: 
\begin{itemize}
\item $f'$ and $g'$ are order-reflecting morphisms of posets (hence injective) whose images are disjoint, with the union of images giving the entire $L$,

\item the image of $f'$ is down-closed and for any $a\in N$ and $b\in M$, we have $f'(a)\leqslant g'(b)$ if and only if there exists $c\in K$ such that $a\leqslant g(c)$ and $f(c)\leqslant b$.
\end{itemize}
It is easy to see that such a diagram has the following universal property (that is a specialization of the construction of a co-comma object in a general $2$-category):
\begin{itemize}
\item the square lax commutes, i.e., there is a $2$-cell from $f'\circ g$ to $g'\circ f$ (thus, $f'\circ g\leqslant g'\circ f$),

\item for any other lax commuting square over $f,g$, 

$$
\begin{tikzpicture}
	\node (K) at(0,2){$K$};
	\node (M) at(2,2){$M$};
	\node (L) at(2,0){$L'$};
	\node (N) at(0,0){$N$};

	\path [->] (K) edge node [above]{$f$} (M);
	\path [->] (M) edge node [right]{$g''$} (L);
	\path [->] (K) edge node [left]{$g$} (N);
	\path [->] (N) edge node [below]{$f''$} (L);
	\draw [->,draw=none] (K) -- node {$\leqslant$} (L);
\end{tikzpicture}
$$

\noindent there is a unique morphism $u$ such that $u\circ f'=f''$ and $u\circ g'=g''$, as shown on the picture:
$$
\begin{tikzpicture}
	\node (K) at(0,2){$K$};
	\node (M) at(2,2){$M$};
	\node (L) at(2,0){$L$};
	\node (N) at(0,0){$N$};
	\node (R) at(3,-1){$L'$};

	\path [->] (K) edge node [above]{$f$} (M);
	\path [->] (M) edge node [right]{$g'$} (L);
	\path [->] (K) edge node [left]{$g$} (N);
	\path [->] (N) edge node [below]{$f'$} (L);
	\draw [->,draw=none] (K) -- node {$\leqslant$} (L);
	\path [->,densely dotted] (L) edge node [above]{$u$} (R);
	\path [->] (N) edge[bend right=30] node [below]{$f''$} (R) ;
	\path [->] (M) edge[bend left=30] node [right]{$g''$} (R) ;
\end{tikzpicture}
$$
\end{itemize}
Just as any other (weighted) colimit, a lax pushout is unique up to a canonical isomorphism. It exists for any given $f$ and $g$ as we can construct one \emph{concretely} as follows:
\begin{itemize}
\item Let the elements of $L$ be pairs of the form $(a,1)$, where $a\in N$, as well as pairs of the form $(b,2)$, where $b\in M$.

\item Define $(a,1)\leqslant (a',1)$ when $a\leqslant a'$ in $N$.

\item Define $(b,2)\leqslant (b',2)$ when $b\leqslant b'$ in $N$.

\item Define $(a,1)\leqslant (b,2)$ when there exists $c\in K$ such that $a\leqslant g(c)$ and $f(c)\leqslant b$.

\item Define $f'$ and $g'$ by the identities $f'(a)=(a,1)$ and $g'(b)=(b,2)$.
\end{itemize}

\begin{lemma}\label{Lem lax pushout matrix}
Each square in Figure~\ref{FigD} is a lax pushout.
\end{lemma}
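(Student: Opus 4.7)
I would verify the defining conditions of a lax pushout directly, by unwinding the concrete description of iterated lax sums. Fix a typical square from Figure~\ref{FigD}; its four sides are the top horizontal $f^{(j)}_{i}\colon \Sigma^{j}_{i} M_{i}\to \Sigma^{j}_{i+1} M_{i+1}$, the left vertical $\iota^{i}_{i}\colon \Sigma^{j}_{i} M_{i}\to \Sigma^{j+1}_{i} M_{i}$, the right vertical $\iota^{i+1}_{i+1}\colon \Sigma^{j}_{i+1} M_{i+1}\to \Sigma^{j+1}_{i+1} M_{i+1}$, and the bottom horizontal $f^{(j+1)}_{i}\colon \Sigma^{j+1}_{i} M_{i}\to \Sigma^{j+1}_{i+1} M_{i+1}$ (where the superscript $(j)$ indicates $j$-fold iteration, in the notation of Figure~\ref{FigD}).

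The first step is to unpack the concrete model. By construction, $\Sigma^{j+1}_{i+1} M_{i+1}$ is the lax sum of the sequence $\Sigma^{j}_{0} M_{0}\to\cdots\to \Sigma^{j}_{i+1} M_{i+1}$, so its elements are pairs $(\alpha,k)$ with $\alpha\in \Sigma^{j}_{k} M_{k}$ and $0\le k\le i+1$, ordered by $(\alpha,k)\le(\alpha',k')$ iff $k\le k'$ and the composite $f^{(j)}_{k'-1}\circ\cdots\circ f^{(j)}_{k}$ sends $\alpha$ to something below $\alpha'$ (the composite being the identity when $k=k'$). Under this description the four maps act by $\iota^{i}_{i}(\alpha)=(\alpha,i)$, $\iota^{i+1}_{i+1}(\beta)=(\beta,i+1)$, and $f^{(j+1)}_{i}(\alpha,k)=(\alpha,k)$, the last being a genuine inclusion since $k\le i<i+1$.

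Next I would check the four lax pushout conditions. Order-reflection of $\iota^{i+1}_{i+1}$ and $f^{(j+1)}_{i}$, together with down-closedness of the image of $f^{(j+1)}_{i}$, are immediate from Theorem~\ref{ThmB}(\ref{for free}). The images of the two maps partition $\Sigma^{j+1}_{i+1} M_{i+1}$ according to whether the second coordinate lies in $\{0,\dots,i\}$ or equals $i+1$. The critical condition is the equivalence: for $a=(\alpha,k)\in \Sigma^{j+1}_{i} M_{i}$ and $b=\beta\in \Sigma^{j}_{i+1} M_{i+1}$, one has $f^{(j+1)}_{i}(a)\le \iota^{i+1}_{i+1}(b)$ in $\Sigma^{j+1}_{i+1} M_{i+1}$ iff there exists $c\in \Sigma^{j}_{i} M_{i}$ with $a\le \iota^{i}_{i}(c)$ and $f^{(j)}_{i}(c)\le b$. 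Unwinding both sides via the concrete formulas reduces this to: $(f^{(j)}_{i}\circ\cdots\circ f^{(j)}_{k})(\alpha)\le\beta$ holds iff some $c$ satisfies $(f^{(j)}_{i-1}\circ\cdots\circ f^{(j)}_{k})(\alpha)\le c$ and $f^{(j)}_{i}(c)\le\beta$. The forward direction is monotonicity; for the converse one takes $c=(f^{(j)}_{i-1}\circ\cdots\circ f^{(j)}_{k})(\alpha)$, with the convention that the empty composite (when $k=i$) is the identity.

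The main obstacle is purely notational — keeping the stacked indices disentangled and the composites straight. Conceptually the lemma is not deep: the square represents appending the single new summand $\Sigma^{j}_{i+1} M_{i+1}$ at the $(j+1)$-th level of summation, and this is precisely the operation that a concrete lax sum encodes as a lax pushout.
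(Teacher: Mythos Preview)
Your proposal is correct and follows essentially the same route as the paper. The paper first observes that it suffices to treat the squares in the top row (since every lower row is obtained from the one above it by applying the same lax-sum construction to a new sequence), and then verifies the four lax-pushout conditions exactly as you do; your argument simply skips that reduction and runs the identical computation at an arbitrary level $j$, which amounts to applying the top-row argument to the sequence $\Sigma^{j}_{0}M_0\to\Sigma^{j}_{1}M_1\to\cdots$. One small slip: in your final equivalence you have the labels ``forward'' and ``converse'' interchanged --- the direction proved by monotonicity is the one that starts from the existence of $c$, while the direction requiring the explicit choice $c=(f^{(j)}_{i-1}\circ\cdots\circ f^{(j)}_{k})(\alpha)$ is the one starting from the inequality --- but the content of both arguments is right.
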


\begin{proof}
It suffices to prove this for the squares in the first row, since the rest of the rows are obtained by iteration. So we prove that for each $n$, the following square is a lax pushout:

$$
\begin{tikzpicture}
	\node (K) at(0,3){$M_n$};
	\node (M) at(3,3){$M_{n+1}$};
	\node (L) at(3,0){$\Sigma_{n+1} M_{n+1}$};
	\node (N) at(0,0){$\Sigma_n M_n$};

	\path [->] (K) edge node [above]{$f_n$} (M);
	\path [->] (M) edge node [right]{$\iota^{n+1}_{n+1}$} (L);
	\path [->] (K) edge node [left]{$\iota^n_n$} (N);
	\path [->] (N) edge node [below]{$f'_n$} (L);
	\draw [->,draw=none] (K) -- node {$\leqslant$} (L);
\end{tikzpicture}
$$ 

\noindent For this, we need to check the following:
\begin{itemize}
\item[(i)] $f'_n$ an order-preserving morphism of posets (and hence is injective), having down-closed image.

\item[(ii)] $\iota^{n+1}_{n+1}$ is an order-reflecting morphism of posets (and hence injective).

\item[(iii)] The images of $f'_n$ and $\iota^{n+1}_{n+1}$ are disjoint, and their union is the entire $\Sigma_{n+1}M_{n+1}$.

\item[(iv)] For any $a\in \Sigma_nM_n$ and $b\in M_{n+1}$, we have $f'_n(a)\leqslant \iota^{n+1}_{n+1}(b)$ if and only if there exists $c\in K$ such that $a\leqslant \iota^{n}_n(c)$ and $f_n(c)\leqslant b$.
\end{itemize}
We know (i) and (ii) already from Theorem~\ref{ThmB}. Since $f'_n$ is defined by $f'_n(x,j)=(x,j)$, where $j\leqslant n$ and $x\in M_j$, and $\iota^{n+1}_{n+1}(x)=(x,n+1)$ where $x\in M_{n+1}$, the images of these two functions are clearly disjoint. Their union is the entire $\Sigma_{n+1}M_{n+1}$ since the latter only consists of elements of the form $(x,j)$, where $x\in M_j$ with $j\leqslant n+1$. So we have (iii). For any $(x,j)\in \Sigma_nM_n$ and $y\in M_{n+1}$, we have $f'_n(x,j)\leqslant \iota^{n+1}_{n+1}(y)$ if and only if $(x,j)\leqslant (y,n+1)$. This is the case if and only if $x^{n+1}\leqslant y$ (see the previous section for this notation). If $x^{n+1}\leqslant y$, then define $c=x^n\in M_n$. We get $(x,j)\leqslant (c,n)=\iota^n_n(c)$ and $f_n(c)=x^{n+1}\leqslant y$. Conversely, if there exists $c\in M_n$ such that $(x,j)\leqslant \iota^n_n(c)=(c,n)$ and $f_n(c)\leqslant y$, then $x^n\leqslant c$ and so $x^{n+1}=f_n(x^n)\leqslant f_n(c)\leqslant y$, which gives $(x,j)\leqslant (y,n+1)$. The proof is now complete.
\end{proof}

We are now ready to prove Theorem~\ref{ThmD}.

First, we show the following (which, actually, does not require any of what we have done above): 

\begin{lemma}
The elements of $C^n_{k+m}$ satisfying ($\ast$) are closed under meets and joins in $C^n_{k+m}$. 
\end{lemma}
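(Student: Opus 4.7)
The plan is to compute meets and joins componentwise (since $C^n_{k+m}$ is a product of chains, where meet and join are coordinatewise $\min$ and $\max$) and then verify that ($\ast$) is preserved in each case. So let $x=(x_1,\dots,x_n)$ and $y=(y_1,\dots,y_n)$ be two elements both satisfying ($\ast$); I will show that $x\wedge y = (\min(x_i,y_i))_i$ and $x\vee y = (\max(x_i,y_i))_i$ each satisfy ($\ast$).

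For the join $w_i = \max(x_i, y_i)$, I will argue as follows. Suppose $w_{i+1} \in \{0,\dots,k-1\}$. Then $\max(x_{i+1},y_{i+1}) < k$, so \emph{both} $x_{i+1}<k$ and $y_{i+1}<k$. Condition ($\ast$) applied to $x$ and $y$ separately then yields $x_i \leqslant x_{i+1}$ and $y_i \leqslant y_{i+1}$, whence $w_i = \max(x_i,y_i) \leqslant \max(x_{i+1},y_{i+1}) = w_{i+1}$, as required.

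For the meet $z_i = \min(x_i, y_i)$, suppose $z_{i+1} \in \{0,\dots,k-1\}$. Now only one of $x_{i+1},y_{i+1}$ needs to be the minimum; say (without loss of generality) $x_{i+1} = z_{i+1} < k$. Then ($\ast$) applied to $x$ gives $x_i \leqslant x_{i+1}$, and so $z_i \leqslant x_i \leqslant x_{i+1} = z_{i+1}$, which is the desired inequality.

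I do not expect any real obstacle here; the only mild subtlety is the asymmetry between meet and join, namely that for meets a single summand controls the minimum (so we invoke ($\ast$) for only one of $x$ or $y$), whereas for joins both summands are forced to be below $k$, and so both applications of ($\ast$) are available. Everything else is routine coordinatewise bookkeeping.
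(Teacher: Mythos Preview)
Your proof is correct and follows essentially the same approach as the paper: componentwise computation of $\min$ and $\max$, then verifying ($\ast$) by noting that for joins both coordinates are forced below $k$ so ($\ast$) applies to each factor, while for meets only the minimizing factor needs ($\ast$). The paper treats meets before joins and uses the notation $x'$ instead of $y$, but the argument is otherwise identical, including the without-loss-of-generality step for meets.
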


\begin{proof}
Suppose both $(x_1,\dots,x_n)$ and $(x'_1,\dots,x'_n)$ satisfy ($\ast$). Their meet in $C^n_{k+m}$ is given by 
$$(x_1,\dots,x_n)\wedge (x'_1,\dots,x'_n)=(\min(x_1,x'_1),\dots,\min(x_n,x'_n)).$$
Suppose $\min(x_{i+1},x'_{i+1})\in \{1,\dots,k-1\}$. Without loss of generality, assume  
$x_{i+1}\le x'_{i+1}$.
Then $x_i\leqslant x_{i+1}$ and so
$$\min(x_i,x'_i)\leqslant x_{i+1}=\min(x_{i+1},x'_{i+1}).$$ This proves that tuples satisfying ($\ast$) are closed under meets. The join in $C^n_{k+m}$ is given by
$$(x_1,\dots,x_n)\vee (x'_1,\dots,x'_n)=(\max(x_1,x'_1),\dots,\max(x_n,x'_n)).$$
Suppose $\max(x_{i+1},x'_{i+1})\in \{1,\dots,k-1\}$. Then both $x_{i+1}$ and $x'_{i+1}$ belong to $\{1,\dots,k-1\}$ and the conclusion
  \[\max(x_i,x'_i)\leqslant\max(x_{i+1},x'_{i+1})\]
is immediate by the fact that both tuples satisfy ($\ast$).
\end{proof}

To prove Theorem~\ref{ThmD}, it remains to show that each poset $\Sigma^k_nC^n_m$ is isomorphic to the sublattice of $C^n_{k+m}$ consisting of $n$-tuples satisfying ($\ast$). Consider the diagram 

$$\begin{tikzpicture}
		\pgfmathsetmacro{\ix}{1.7}
		\pgfmathsetmacro{\iy}{0}
		\pgfmathsetmacro{\jx}{0}
		\pgfmathsetmacro{\jy}{-1.7}
		\foreach \i in {0,...,3} {
			\foreach \j in {0,...,3} {
					\node (\i\j) at(\ix*\i+\jx*\j,\iy*\i+\jy*\j){$C^{\i}_{\j+m}$};
				}
			}
		\foreach \i in {0,...,2} {
			\foreach \j in {0,...,2} {
					\node  at(\ix*\i+\jx*\j+0.8,\iy*\i+\jy*\j-0.85){$\leqslant$};
				}
			}
		\foreach \i in {4} {
			\foreach \j in {0,...,3} {
					\node (\i\j) at(\ix*\i+\jx*\j,\iy*\i+\jy*\j){$\dots$};
				}
			}
		\foreach \i in {0,...,3} {
			\foreach \j in {4} {
					\node (\i\j) at(\ix*\i+\jx*\j,\iy*\i+\jy*\j){$\vdots$};
				}
			}
		\foreach \i in {0,...,3} {
			\draw [->] (\i0) -- (\i1);
			\draw [->] (\i1) -- (\i2);
			\draw [->] (\i2) -- (\i3);
			\draw [->] (\i3) -- (\i4);
			\draw [->] (0\i) -- (1\i);
			\draw [->] (1\i) -- (2\i);
			\draw [->] (2\i) -- (3\i);
			\draw [->] (3\i) -- (4\i);

		}
\end{tikzpicture}$$

\noindent of lattices $C^n_{k+m}$, where each vertical arrow $C^n_{k+m}\to C^{n}_{k+1+m}$ maps $(x_1,\dots,x_n)$ to $(x_1+1,\dots,x_n+1)$ and each horizontal arrow $C^n_{k+m}\to C^{n+1}_{k+m}$ maps $(x_1,\dots,x_n)$ to $(0,x_1,\dots,x_n)$. Each square in this diagram lax commutes: left-bottom composite is below the top-right composite. Indeed:
$$(0,x_1+1,\dots,x_n+1)\leqslant (1,x_1+1,\dots,x_n+1).$$
It is easy to see that both the horizontal and the vertical morphisms preserve the property ($\ast$). So the diagram above restricts to the following diagram, where $C^{n\ast}_{k+m}$ denotes the sublattice of $C^n_{k+m}$ determined by the property ($\ast$), while the morphisms are defined in the same way as above:

$$\begin{tikzpicture}
		\pgfmathsetmacro{\ix}{1.7}
		\pgfmathsetmacro{\iy}{0}
		\pgfmathsetmacro{\jx}{0}
		\pgfmathsetmacro{\jy}{-1.7}
		\foreach \i in {0,...,3} {
			\foreach \j in {0,...,3} {
					\node (\i\j) at(\ix*\i+\jx*\j,\iy*\i+\jy*\j){$C^{\i\ast}_{\j+m}$};
				}
			}
		\foreach \i in {0,...,2} {
			\foreach \j in {0,...,2} {
					\node  at(\ix*\i+\jx*\j+0.8,\iy*\i+\jy*\j-0.85){$\leqslant$};
				}
			}
		\foreach \i in {4} {
			\foreach \j in {0,...,3} {
					\node (\i\j) at(\ix*\i+\jx*\j,\iy*\i+\jy*\j){$\dots$};
				}
			}
		\foreach \i in {0,...,3} {
			\foreach \j in {4} {
					\node (\i\j) at(\ix*\i+\jx*\j,\iy*\i+\jy*\j){$\vdots$};
				}
			}
		\foreach \i in {0,...,3} {
			\draw [->] (\i0) -- (\i1);
			\draw [->] (\i1) -- (\i2);
			\draw [->] (\i2) -- (\i3);
			\draw [->] (\i3) -- (\i4);
			\draw [->] (0\i) -- (1\i);
			\draw [->] (1\i) -- (2\i);
			\draw [->] (2\i) -- (3\i);
			\draw [->] (3\i) -- (4\i);

		}
\end{tikzpicture}$$

\noindent Lemma~\ref{Lem lax pushout matrix} and the fact that lax pushouts are unique up to a canonical isomorphism, reduces the proof of Theorem~\ref{ThmD} to showing that each square the in the above diagram is a lax pushout. Indeed, for if it is so, we will be able to recursively create isomorphisms $d_{k,n,m}\colon\Sigma^k_n C^n_m\to C^{n\ast}_{k+m}$, the diagonal arrows in the following diagram,

$$
\begin{tikzpicture}
		\pgfmathsetmacro{\ix}{2}
		\pgfmathsetmacro{\iy}{0}
		\pgfmathsetmacro{\jx}{0}
		\pgfmathsetmacro{\jy}{-2}
		\foreach \i in {0,...,3} {
			\foreach \j in {0,...,3} {
					\node (A\i\j) at(\ix*\i+\jx*\j,\iy*\i+\jy*\j){$\Sigma^{\j}_{\i}C^{\i}_{m}$};
				}
			}
		\foreach \i in {0,...,2} {
			\foreach \j in {0,...,2} {
					\node  at(\ix*\i+\jx*\j+0.8,\iy*\i+\jy*\j-0.85){};
				}
			}
		\foreach \i in {4} {
			\foreach \j in {0,...,3} {
					\node (A\i\j) at(\ix*\i+\jx*\j,\iy*\i+\jy*\j){$\dots$};
				}
			}
		\foreach \i in {0,...,3} {
			\foreach \j in {4} {
					\node (A\i\j) at(\ix*\i+\jx*\j,\iy*\i+\jy*\j){$\vdots$};
				}
			}
		
		\pgfmathsetmacro{\ix}{2}
		\pgfmathsetmacro{\iy}{0}
		\pgfmathsetmacro{\jx}{0}
		\pgfmathsetmacro{\jy}{-2}
		\foreach \i in {0,...,3} {
			\foreach \j in {0,...,3} {
					\node (B\i\j) at(1+\ix*\i+\jx*\j,1+\iy*\i+\jy*\j){$C^{\i\ast}_{\j+m}$};
				}
			}
		\foreach \i in {0,...,2} {
			\foreach \j in {0,...,2} {
					\node  at(1+\ix*\i+\jx*\j+0.8,1+\iy*\i+\jy*\j-0.85){};
				}
			}
		\foreach \i in {4} {
			\foreach \j in {0,...,3} {
					\node (B\i\j) at(1+\ix*\i+\jx*\j,1+\iy*\i+\jy*\j){$\dots$};
				}
			}
		\foreach \i in {0,...,3} {
			\foreach \j in {4} {
					\node (B\i\j) at(1+\ix*\i+\jx*\j,1+\iy*\i+\jy*\j){$\vdots$};
				}
			}
			
		\foreach \i in {0,...,3} {
			\draw [->] (A\i0) -- (A\i1);
			\draw [->] (A\i1) -- (A\i2);
			\draw [->] (A\i2) -- (A\i3);
			\draw [->] (A\i3) -- (A\i4);
			\draw [->] (A0\i) -- (A1\i);
			\draw [->] (A1\i) -- (A2\i);
			\draw [->] (A2\i) -- (A3\i);
			\draw [->] (A3\i) -- (A4\i);

		}
		\foreach \i in {0,...,3} {
			\draw [->] (B\i0) -- (B\i1);
			\draw [->] (B\i1) -- (B\i2);
			\draw [->] (B\i2) -- (B\i3);
			\draw [->] (B\i3) -- (B\i4);
			\draw [->] (B0\i) -- (B1\i);
			\draw [->] (B1\i) -- (B2\i);
			\draw [->] (B2\i) -- (B3\i);
			\draw [->] (B3\i) -- (B4\i);

		}
		
		\foreach \i in {0,...,3} {
    		\foreach \j in {0,...,3} {
			\draw [->] (A\i\j) -- (B\i\j);
			}
		}
\end{tikzpicture}
$$

\noindent starting with identity morphisms along the top and the left side of the picture: note that when $n=0$ or $k=0$, we have  $\Sigma^k_n C^n_m=C^{n\ast}_{k+m}$, and so we can set the $d_{0,n,m}$ and $d_{k,0,m}$ diagonal morphisms to be the identity morphisms.

So the final step in the proof is given by the following:

\begin{lemma}\label{Lem lax pushout}
For any $k,n,m$, the following square is a lax pushout:
$$
\begin{tikzpicture}
	\node (K) at(0,3){$C^{n\ast}_{k+m}$};
	\node (M) at(3,3){$C^{n+1\ast}_{k+m}$};
	\node (L) at(3,0){$C^{n+1\ast}_{k+1+m}$};
	\node (N) at(0,0){$C^{n\ast}_{k+1+m}$};

	\node (KK) at(-3,4){$(x_1,\dots,x_n)$};
	\node (MM1) at(3,4){$(0,x_1,\dots,x_n)$};
	\node (MM2) at(6,3){$(y_1,\dots,y_{n+1})$};
	\node (NN1) at(-3,0){$(x_1+1,\dots,x_n+1)$};
	\node (NN2) at(0,-1){$(z_1,\dots,z_n)$};

	\node (LL1) at(3,-1){$(0,z_1,\dots,z_n)$};
	\node (LL2) at(6,0){$(y_1+1,\dots,y_{n+1}+1)$};

	\path [->] (K) edge node [above]{} (M);
	\path [->] (M) edge node [right]{} (L);
	\path [->] (K) edge node [left]{} (N);
	\path [->] (N) edge node [below]{} (L);
	\draw [->,draw=none] (K) -- node {} (L);

	\path [|->] (KK) edge node{} (MM1);
	\path [|->] (KK) edge node{} (NN1);
	\path [|->] (MM2) edge node{} (LL2);
	\path [|->] (NN2) edge node{} (LL1);
\end{tikzpicture}
$$ 
\end{lemma}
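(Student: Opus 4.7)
The plan is to unfold the concrete description of a lax pushout given just before Lemma~\ref{Lem lax pushout matrix} and verify its four constituent conditions directly. Denote by $f,g$ the top and left arrows and by $f',g'$ the bottom and right arrows of the square; thus $f(x_1,\dots,x_n)=(0,x_1,\dots,x_n)$, $g(x_1,\dots,x_n)=(x_1+1,\dots,x_n+1)$, and $f',g'$ are defined by the same two coordinate formulas on their respective domains. First I would check that each of the four maps respects ($\ast$) with the correct parameter (noting that ($\ast$) for $C^{n\ast}_{k+m}$ uses the threshold $\{0,\dots,k-1\}$ while ($\ast$) for $C^{n\ast}_{k+1+m}$ uses $\{0,\dots,k\}$); this is a short direct verification. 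Lax commutativity $f'\circ g\leqslant g'\circ f$ then follows immediately since the two composites agree in the last $n$ coordinates and differ only by $0\leqslant 1$ in the first. Both $f'$ and $g'$ are order-reflecting (and hence injective), as they are defined componentwise.

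The structurally crucial point is condition (iii), namely that the images of $f'$ and $g'$ partition $C^{n+1\ast}_{k+1+m}$. The images are evidently distinguished by whether the first coordinate is $0$ or $\geqslant 1$, so they are disjoint; down-closedness of the image of $f'$ follows because any $(w_1,\dots,w_{n+1})\leqslant(0,z_1,\dots,z_n)$ forces $w_1=0$, and the tail $(w_2,\dots,w_{n+1})$ inherits ($\ast$) with parameter $k+1$ by restricting the constraint to indices $i\geqslant 2$. The real content is the claim that \emph{every} tuple in $C^{n+1\ast}_{k+1+m}$ with $w_1\geqslant 1$ has all coordinates $\geqslant 1$: inductively, if $w_i\geqslant 1$ while $w_{i+1}=0$, then $w_{i+1}\in\{0,\dots,k\}$ and ($\ast$) would force $w_i\leqslant 0$. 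Granted this, shifting each coordinate down by $1$ produces a preimage in $C^{n+1\ast}_{k+m}$; one checks that ($\ast$) with parameter $k$ on the shifted tuple is implied by ($\ast$) with parameter $k+1$ on the original, since $w_{i+1}-1\in\{0,\dots,k-1\}$ means $w_{i+1}\in\{1,\dots,k\}\subseteq\{0,\dots,k\}$.

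For the lax pushout condition (iv), fix $a=(z_1,\dots,z_n)\in C^{n\ast}_{k+1+m}$ and $b=(y_1,\dots,y_{n+1})\in C^{n+1\ast}_{k+m}$. Unwinding, $f'(a)\leqslant g'(b)$ amounts to $z_i\leqslant y_{i+1}+1$ for $i=1,\dots,n$, while existence of $c=(x_1,\dots,x_n)\in C^{n\ast}_{k+m}$ with $a\leqslant g(c)$ and $f(c)\leqslant b$ asks for $z_i-1\leqslant x_i\leqslant y_{i+1}$. The canonical choice $c=(y_2,\dots,y_{n+1})$ does the job: both inequalities are automatic, and membership of the truncation in $C^{n\ast}_{k+m}$ reduces ($\ast$) for $(y_2,\dots,y_{n+1})$ with parameter $k$ to a subset of the instances of ($\ast$) for $(y_1,\dots,y_{n+1})$ with parameter $k$ that are already given. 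The converse direction of (iv) is immediate from lax commutativity together with monotonicity of $f'$ and $g'$.

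I expect the main obstacle to be not any single step but the careful bookkeeping of how the parameter in ($\ast$) shifts between $k$ and $k+1$ as one moves around the square --- in particular, checking that the four maps are well-defined and that the shift-down by $1$ in (iii) lands in the correct constrained sublattice. Once this parameter accounting is set up cleanly, every clause in the definition of a lax pushout reduces to a one-line coordinate verification.
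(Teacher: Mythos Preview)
Your proposal is correct and follows essentially the same route as the paper: the partition of $C^{n+1\ast}_{k+1+m}$ by first coordinate, the inductive argument that $w_1\geqslant 1$ forces all $w_i\geqslant 1$ via ($\ast$), the parameter-shift check that the down-shift lands in $C^{n+1\ast}_{k+m}$, and the canonical witness $c=(y_2,\dots,y_{n+1})$ for condition~(iv) are exactly the paper's moves. The only difference is organizational---you verify well-definedness of the four maps inside the lemma, whereas the paper handled that in the paragraph preceding it.
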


\begin{proof}
The bottom map is clearly an order-reflecting morphism of posets, and it is easy to see that its image is down-closed. The right map is also obviously an order-reflecting morphism of posets. The images of these two maps certainly do not intersect. Consider any tuple $(x_1,\dots,x_{n+1})$ in $C^{n+1\ast}_{k+1+m}$. If $x_1=0$, then it belongs to the image of the bottom map. If $x_1\neq 0$, then by the property $(\ast)$, we can never have $x_j=0$ for any other coordinate and so $(x_1,\dots,x_{n+1})=(y_1+1,\dots,y_{n+1}+1)$ for some tuple  $(y_1,\dots,y_{n+1})$ in $C^{n+1}_{k+m}$. We need to check that $(y_1,\dots,y_{n+1})$ satisfies ($\ast$). Let $y_{i+1}\in\{0,\dots,k-1\}$. Then $y_{i+1}+1\in\{0,\dots,k\}$. Since $(y_1+1,\dots,y_{n+1}+1)$ satisfies ($\ast$), we have $y_{i}+1\leqslant y_{i+1}+1$, and so $y_{i}\leqslant y_{i+1}$. This proves $(y_1,\dots,y_{n+1})\in C^{n+1\ast}_{k+m}$, showing that every element in the bottom right lattice falls in the image of one of the maps going into it. Complete the proof that the square is a lax pushout, it remains to show that $(0,z_1,\dots,z_n)\leqslant (y_1+1,\dots,y_{n+1}+1)$ for two $(n+1)$-tuples in $C^{n+1\ast}_{k+1+m}$ if and only if 
$$(z_1,\dots,z_n)\leqslant (x_1+1,\dots,x_{n}+1)\textrm{ and }(0,x_1,\dots,x_{n})\leqslant (y_1,\dots,y_{n+1})$$
for some $n$-tuple $(x_1,\dots,x_{n})$ in $C^{n\ast}_{k+m}$. The `if' part is easy to see (it follows from the fact that the square is lax commuting). To show the `only if' part, assume $(0,z_1,\dots,z_n)\leqslant (y_1+1,\dots,y_{n+1}+1)$ in  $C^{n+1\ast}_{k+1+m}$. Then the $(n+1)$-tuple $(y_1,y_2,\dots,y_{n+1})$ belongs to $C^{n+1\ast}_{k+m}$, which easily implies that the $n$-tuple $(y_2,\dots,y_{n})$ belongs to $C^{n\ast}_{k+m}$. This is the desired $n$-tuple, since 
 $$(z_1,\dots,z_n)\leqslant (y_2+1,\dots,y_{n}+1)\textrm{ and }(0,y_2,\dots,y_{n})\leqslant (y_1,\dots,y_{n+1}).$$
This completes the proof. 
\end{proof}

\section{The rows of Figure~\ref{FigA}}\label{Sec rows}

In this section we briefly consider the `orthogonal' situation, i.e., iterated stacking of rows of Figure~\ref{FigA} instead of columns. Theorem~\ref{ThmD} has the following analogue in this case.

\begin{figure}
$$\begin{array}{rl}
 & m=0,1,2,3,4,5,\dots\\\hline
n=0: \\\hline
k=0 & 1, 1, 1, 1, 1, 1, \dots \textrm{(A000012)}\\
k=1 & 1, 1, 1, 1, 1, 1, \dots \textrm{(A000012)}\\
k=2 & 1, 1, 2, 5, 14, 42, \dots \textrm{(A000108 -- Catalan numbers)}\\
k=3 & 1, 1, 5, 42, 462, 6006, \dots \textrm{(A005789 -- $3$-dimensional Catalan numbers)}\\
k=4 & 1, 1, 14, 462, 24024, 1662804, \dots \textrm{(A005790 -- $4$-dimensional Catalan numbers)}\\\hline
n=1:\\\hline
k=0 & 1, 1, 1, 1, 1, 1, \dots \textrm{(A000012)}\\
k=1 & 1, 1, 2, 15, 14, 42, \dots \textrm{(A000108 -- Catalan numbers)}\\
k=2 & 1, 1, 5, 42, 462, 6006, \dots \textrm{(A005789 -- $3$-dimensional Catalan numbers)}\\
k=3 & 1, 1, 14, 462, 24024, 1662804, \dots \textrm{(A005790 -- $4$-dimensional Catalan numbers)}\\
k=4 & 1, 1, 42, 6006, 1662804, 701149020, \dots \textrm{(A005791 -- $5$-dimensional Catalan numbers)}\\\hline
n=2:\\\hline
k=0 & 1, 2, 6, 20, 70, 252, \dots \textrm{(A000680 -- $(2m)!/(m!)^2$)}\\
k=1 & 1, 2, 16, 192, 2816, 46592, \dots \textrm{(A006335 -- Kreweras)}\\
k=2 & 1, 2, 46, 2240, 160504, 14594568, \dots \textrm{(no entry)}\\
k=3 & 1, 2, 140, 30108, 11721144, 6625780016, \dots \textrm{(no entry)}\\
k=4 & 1, 2, 444, 448272, 1024045836, 3936970992944, \dots \textrm{(no entry)}\\\hline
n=3:\\\hline
k=0 & 1, 6, 90, 1680, 34650, 756756, \dots \textrm{(A006480 -- $(3m)!/(m!)^3$)}\\
k=1 & 1, 6, 288, 24444, 2738592, 361998432, \dots \textrm{(no entry)}\\
k=2 & 1, 6, 918, 363984, 234506712, 203517798360, \dots \textrm{(no entry)}\\
k=3 & 1, 6, 2988, 5753484, 22547430432, 137927632096368, \dots \textrm{(no entry)}\\
k=4 & 1, 6, 9936, 96198840, 2404039625820, 109858268535649608, \dots \textrm{(no entry)}\\\end{array}$$
\caption{Maximal chain numbers $\#\Sigma^k_m C^n_m$ and the corresponding entry code (indicated in brackets) in the On-Line Encyclopedia of Integer Sequences (accessed 29/09/2020).}\label{FigC}
\end{figure}

\begin{theorem}\label{ThmE}
  The poset $\Sigma_m^kC_m^n$ is isomorphic to the sublattice $C_m^{n+k\star}$ of $C_m^{n+k}$ consisting of those elements that satisfy
  \begin{enumerate}
    \item[($\star$)] for each $i\in\{1,\dots,n\}$, $x_i\leqslant x_{n+1}\leqslant x_{n+2}\leqslant\dots\leqslant x_{n+k}$.
  \end{enumerate}
\end{theorem}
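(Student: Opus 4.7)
The proof will parallel that of Theorem~\ref{ThmD}, invoking the lax pushout analysis developed there. First I would establish the elementary lemma that $C_m^{n+k\star}$ is closed under the coordinatewise meet and join operations of $C_m^{n+k}$: if $(x_l)$ and $(x'_l)$ both satisfy $(\star)$, then each inequality in the chain $x_i\leqslant x_{n+1}\leqslant\cdots\leqslant x_{n+k}$ is preserved when one takes the coordinatewise minimum (each operand bounds the corresponding coordinate) or maximum (each side is dominated by the join on the other).

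Next I would arrange the lattices $C_m^{n+k\star}$ into a two-dimensional grid indexed by $(k,m)$, with horizontal maps $C_m^{n+k\star}\hookrightarrow C_{m+1}^{n+k\star}$ given by inclusion and vertical maps $C_m^{n+k\star}\to C_m^{n+k+1\star}$ given by
\[(x_1,\ldots,x_{n+k})\mapsto (x_1,\ldots,x_{n+k},m),\]
that is, by appending the top element of $C_m$. Both maps evidently respect $(\star)$, and each square of the grid lax commutes, since the left-then-bottom composite appends $m$ while the top-then-right composite appends $m+1$.

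The central step, analogous to Lemma~\ref{Lem lax pushout}, is to verify that each square of this grid is a lax pushout. The key observation is that every $(y_1,\ldots,y_{n+k+1})\in C_{m+1}^{n+k+1\star}$ satisfies $y_l\leqslant y_{n+k+1}$ for all $l\leqslant n+k$ by $(\star)$, so the value of $y_{n+k+1}$ cleanly determines a decomposition: if $y_{n+k+1}\leqslant m$, all coordinates are at most $m$ and the tuple lies in the image of the bottom inclusion from $C_m^{n+k+1\star}$; if $y_{n+k+1}=m+1$, its truncation $(y_1,\ldots,y_{n+k})$ lies in $C_{m+1}^{n+k\star}$ and the tuple is the image under the right map. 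The lax condition characterising $f'(a)\leqslant g'(b)$ can always be witnessed by taking $c$ to be the truncation of $a$ to its first $n+k$ coordinates; this $c$ satisfies $(\star)$ in $C_m^{n+k\star}$ because truncating the chain condition to an initial segment yields a chain condition of the same form.

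With these ingredients in hand, the conclusion follows by iterative application of the uniqueness of lax pushouts. The bottom row of the grid (at $k=0$) is $C_m^{n+0\star}=C_m^n$, which coincides with the bottom row of Figure~\ref{FigD} for the sequence $M_m=C_m^n$, and Lemma~\ref{Lem lax pushout matrix} ensures that the squares in that figure are also lax pushouts. Matching the two grids square by square from the bottom upward yields canonical isomorphisms $\Sigma_m^k C_m^n\cong C_m^{n+k\star}$ compatible with all structural maps. The main technical obstacle is the case analysis for the lax pushout property, and in particular the ``only if'' direction of the lax relation; once one notices that tuples with $y_{n+k+1}=m+1$ correspond bijectively to elements of $C_{m+1}^{n+k\star}$, the remainder of the verification is a direct transcription of the argument in Lemma~\ref{Lem lax pushout}.
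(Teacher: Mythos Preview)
Your proposal is correct and follows essentially the same route as the paper's sketch: the same lax-pushout grid, the same horizontal inclusion maps $C_m^{n+k\star}\hookrightarrow C_{m+1}^{n+k\star}$ and vertical ``append the top element'' maps $(x_1,\ldots,x_{n+k})\mapsto(x_1,\ldots,x_{n+k},m)$, and the same inductive matching with Figure~\ref{FigD} via Lemma~\ref{Lem lax pushout matrix}. One small point to make explicit: to start the square-by-square recursion you need not only the $k=0$ row but also the $m=0$ column to match, since each lax pushout determines the bottom-right corner from the other three; this is immediate here because for $m=0$ both $\Sigma_0^k C_0^n$ and $C_0^{n+k\star}$ are one-point posets.
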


The proof of this theorem can be established along the lines of the proof of Theorem~\ref{ThmD}, where the lax pushout from Lemma~\ref{Lem lax pushout} would now be replaced with the following lax pushout:
\[\begin{tikzpicture}
	\node (K) at(0,3){$C^{n+k\star}_{m}$};
	\node (M) at(3,3){$C^{n+k\star}_{m+1}$};
	\node (L) at(3,0){$C^{n+k+1\star}_{m+1}$};
	\node (N) at(0,0){$C^{n+k+1\star}_{m}$};

	\node (KK) at(-3,4){$(x_1,\dots,x_{n+k})$};
	\node (MM1) at(3,4){$(x_1,\dots,x_{n+k})$};
	\node (MM2) at(6,3){$(y_1,\dots,y_{n+k})$};
	\node (NN1) at(-3,0){$(x_1,\dots,x_{n+k}, m)$};
	\node (NN2) at(-.5,-1){$(z_1,\dots,z_{n+k+1})$};

	\node (LL1) at(3.5,-1){$(z_1,\dots,z_{n+k+1})$};
	\node (LL2) at(6,0){$(y_1,\dots,y_{n+k}, m+1)$};

	\path [->] (K) edge node [above]{} (M);
	\path [->] (M) edge node [right]{} (L);
	\path [->] (K) edge node [left]{} (N);
	\path [->] (N) edge node [below]{} (L);
	\draw [->,draw=none] (K) -- node {} (L);

	\path [|->] (KK) edge node{} (MM1);
	\path [|->] (KK) edge node{} (NN1);
	\path [|->] (MM2) edge node{} (LL2);
	\path [|->] (NN2) edge node{} (LL1);
\end{tikzpicture}\]
This representation recovers pictures from the Introduction in the case when $k=1$ and $n=1,2$ --- see Figures~\ref{Fig row 2} and \ref{Fig row 3}.

\begin{figure}
\input{Row2}
\caption{Lattices $\Sigma^k_mC^1_m$ for $k\in\{0,1,2\}$ and $m\in\{0,1,2,3,4\}$.}\label{Fig row 2}
\end{figure}

\begin{figure}
\input{Krewerasstack}
\caption{Lattices $\Sigma^k_mC^2_m$ for $k\in\{0,1,2\}$ and $m\in\{0,1,2,3,4\}$.}\label{Fig row 3}
\end{figure} 

Maximal chains in the poset $\Sigma_mC_m^2$ correspond to `lattice walks' on a cartesian plane of length $3m$ starting and ending at $(0,0)$, remaining in the first quadrant and using only the NE, W, and S steps, that is, $(1,1)$, $(-1,0)$ and $(0,-1)$ steps --- this is the Kreweras situation mentioned in the Introduction (see \cite{BouMel05,Kre65}). More generally, we have:

\begin{theorem}
  There is a bijection between the set of maximal chains in the poset $\Sigma_mC_m^n$ and the set of walks of length $(n+1)m$ in $C^n_m$, starting and ending at $(0,\dots,0)$, and using only $(1,1,\dots, 1)$, $(-1,0,\dots, 0)$, $(0,-1,\dots, 0)$, \ldots, and $(0,0,\dots,-1)$ steps.
\end{theorem}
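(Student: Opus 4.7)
The plan is to apply Theorem~\ref{ThmE} with $k=1$ to identify $\Sigma_m C_m^n$ with the sublattice $C_m^{n+1\star}$ of $C_m^{n+1}$, which by ($\star$) consists of those tuples $(x_1,\dots,x_{n+1})$ satisfying $x_i\leqslant x_{n+1}$ for each $i\in\{1,\dots,n\}$. Since this sublattice has bottom element $(0,\dots,0)$ and top element $(m,\dots,m)$, a maximal chain in it consists of $(n+1)m$ covering moves, each of which increments exactly one coordinate by $1$; moreover, along any such chain exactly $m$ of the moves increment $x_{n+1}$ and exactly $m$ increment $x_i$ for each $i\in\{1,\dots,n\}$.

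First I would define the map $\Phi$ from maximal chains to walks. Given a maximal chain $c_0,c_1,\dots,c_{(n+1)m}$ with $c_t=(x_1^{(t)},\dots,x_{n+1}^{(t)})$, set
\[
y_t=(x_{n+1}^{(t)}-x_1^{(t)},\;\dots,\;x_{n+1}^{(t)}-x_n^{(t)}).
\]
A covering move that increments $x_{n+1}$ then translates to the walk step $(1,1,\dots,1)$, while a move incrementing $x_i$ for some $i\leqslant n$ translates to the walk step with $-1$ in the $i$-th position and $0$ elsewhere (denote this step $-e_i$). It is straightforward that the resulting sequence starts and ends at $(0,\dots,0)$ and that each $y_t$ lies in $C^n_m$: the bound $y_t\geqslant 0$ componentwise is exactly condition ($\star$), and $y_t\leqslant m$ componentwise follows from $x_i^{(t)}\geqslant 0$ and $x_{n+1}^{(t)}\leqslant m$.

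For the inverse $\Psi$, I would take a walk $s_1,\dots,s_{(n+1)m}$ of the prescribed form and reconstruct $c_t=(x_1^{(t)},\dots,x_{n+1}^{(t)})$ by letting $x_{n+1}^{(t)}$ be the number of $(1,\dots,1)$-steps among $s_1,\dots,s_t$, and $x_i^{(t)}$ be the number of $-e_i$ steps. Then $x_i^{(t)}\leqslant x_{n+1}^{(t)}$ is exactly the condition that the $i$-th coordinate of the walk has remained nonnegative, so $c_t$ satisfies ($\star$); and since the walk uses exactly $m$ steps of each of the $n+1$ types (forced by the length and by the walk returning to the origin), each coordinate of $c_t$ is bounded above by $m$. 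Hence $c_t\in C_m^{n+1\star}$, and consecutive $c_t$'s differ by a single elementary increment, yielding a maximal chain. The maps $\Phi$ and $\Psi$ are then manifestly mutually inverse.

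The step that demands the most care is reconciling the \emph{local} admissibilities on the two sides: incrementing $x_{n+1}$ in the chain is forbidden precisely when $x_{n+1}=m$, while a $(1,\dots,1)$ step in the walk is forbidden precisely when some $y_i=m$, and these two conditions are not pointwise equivalent. The reason the bijection nevertheless goes through is that the \emph{global} constraints (length $(n+1)m$ and return to the origin) force the step-type totals to be exactly $m$ for each of the $n+1$ types, so $x_{n+1}\leqslant m$ (and similarly $x_i\leqslant m$) is maintained throughout automatically. Once this bookkeeping is spelled out, the rest of the verification is routine.
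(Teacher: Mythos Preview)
Your proof is correct and follows essentially the same approach as the paper: both invoke Theorem~\ref{ThmE} with $k=1$ to identify $\Sigma_m C_m^n$ with $C_m^{n+1\star}$ and then translate an increment of $x_i$ ($i\leqslant n$) into the step $-e_i$ and an increment of $x_{n+1}$ into the diagonal step $(1,\dots,1)$. The paper gives only a sketch of this correspondence, whereas you have supplied the explicit position map $y_t=(x_{n+1}^{(t)}-x_1^{(t)},\dots,x_{n+1}^{(t)}-x_n^{(t)})$, verified both directions, and correctly isolated the one nontrivial point---that the bound $x_{n+1}^{(t)}\leqslant m$ for $\Psi$ requires the global step-count argument rather than a local one.
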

\begin{proof}[Proof (sketch)]
  Using the representation $\Sigma_mC_m^n\approx C_m^{n+1\star}$ of Theorem~\ref{ThmE}, the desired bijection is given by: 
  \begin{itemize}
      \item for $1\leqslant i\leqslant n$, associating an increase in the $i$-th coordinate of a maximal chain in $C_m^{n+1\star}$ with the step in a walk given by $-1$ in the $i$-th position, 
      
      \item and associating an increase in the $(n+1)$-th coordinate with the diagonal step $(1,1,\dots,1)$ in a walk.  
      \end{itemize}
\end{proof}

The first few terms of the sequence of maximal chain numbers for the stacking of lattices along the rows of Figure~\ref{FigA} are shown in Figure~\ref{FigC}. 

\bigskip
\textbf{Acknowledgement.}
The authors would like to thank Sarah Selkirk for taking an interest in an early version of this paper and insightful discussions that followed, as well as some useful suggestions on the presentation.

\end{document}